\documentclass[11pt,letterpaper]{amsart}

\usepackage[english]{babel}
\usepackage[utf8x]{inputenc}
\usepackage[T1]{fontenc}
\usepackage[normalem]{ulem}


\usepackage{amsmath}
\usepackage{amsfonts}
\usepackage{amsthm}
\usepackage{graphicx}
\usepackage{enumerate}
\usepackage[colorinlistoftodos]{todonotes}
\usepackage[colorlinks=true, allcolors=blue]{hyperref}

\numberwithin{equation}{section}
\newtheorem{theorem}{Theorem}[section]
\newtheorem{corollary}[theorem]{Corollary}
\newtheorem{lemma}[theorem]{Lemma}
\newtheorem{proposition}[theorem]{Proposition}

\newtheorem{claim}[theorem]{Claim}
\theoremstyle{definition}

\theoremstyle{remark}
\newtheorem{remark}[theorem]{Remark}
\newtheorem{example}[theorem]{Example}

\newcommand{\R}{\mathbb{R}}
\newcommand{\Z}{\mathbb{Z}}

\newcommand{\conv}{\operatorname{conv}}
\newcommand{\sgn}{\operatorname{sign}}
\newcommand{\ext}{\operatorname{ext}}

\newcommand{\colvec}[2]{\left(#1, #2 \right)^T}

\title{Strictly Expansive Matrices}
\author{Darrin Speegle}
\address{Department of Mathematics and Statistics, Saint Louis University, St. Louis, MO 63103, USA}
\email{speegled@slu.edu}

\subjclass[2020]{Primary: 42C20. Secondary: 15A12, 52C20}

\begin{document}

\begin{abstract}
The class of strictly expansive matrices was introduced in \cite{BowSpe02MeyerType}, where it was shown that if $A$ is an integer valued, strictly expansive matrix, then there exists an orthonormal $A$-wavelet whose Fourier transform is compactly supported and smooth. We show that strongly connected diagonally dominant integer matrices are strictly expansive, and that integer matrices with determinant two are not strictly expansive with respect to particularly nice sets.
\end{abstract}

\maketitle

\section{Introduction}

An $n \times n$ integer matrix $A$ is said to be \textit{expansive} if all of the eigenvalues of $A$ are larger than one in modulus. The expansive matrix $A$ is said to be \textit{strictly expansive relative to $K$} if $K$ is compact,  
\[
\sum_{k \in \Z^n} I_K(x + k) = 1\,\, a.e., 
\]
and $A(K^\circ) \supset K$. An expansive, integer matrix $A$ for which there exists such a $K$ is said to be \textit{strictly expansive}. 

In this paper, we study matrices that are strictly expansive, as well as matrices that cannot be strictly expansive with respect to a particularly nice set. The two main results are first, if the integer matrix $A$ is strongly connected, expansive and strictly diagonally dominant, then $A$ is strictly expansive. This can be seen as a complement to the widely cited result of Varah \cite{Var75}, which states that if for every $i$ $\left|a_{ii}\right| \ge \sum_{j \not= i} \left|a_{ij}\right | + \alpha$, then $\|A^{-1}\|_\infty \le 1/\alpha$. The second result is that if $\left|\det(A)\right | = 2$, then $A$ is not strictly expansive relative to a unit ball of a Banach space. In fact, more is shown. We show that such $A$ is not strictly expansive relative to a set $K$ which is symmetric, nor to a set $K$ which is convex, nor to a set $K$ with connected boundary. Nonetheless, it remains open whether the matrix $A = \begin{pmatrix}
    0&1\\2&0
\end{pmatrix}$ is strictly expansive.

Our reasons for studying strictly expansive matrices are many. First, an orthonormal $A$ (multi)-wavelet $(\psi^1, \ldots, \psi^M\}$ is said to be a \textit{Meyer-type wavelet} if each $\hat \psi^i$ is smooth and compactly supported. It is an open question in dimension $n \ge 3$ whether for every expansive $n \times n$ matrix with integer entries Meyer-type wavelets exist with the minimal $M = \left|\det(A)\right| - 1$. However, if $A$ is a strictly expansive matrix, then there exists a Meyer-type orthonormal $A$ wavelet with $|\det(A)| - 1$ generators \cite{BowSpe02MeyerType}. While this is not known to be a necessary condition for the existence of Meyer-type wavelets, it is a relatively simple condition which guarantees the existence of nice wavelets. Additionally, the case $n = 2$ was solved by showing that there are only seven equivalence classes of expansive, integer valued, $2 \times 2$ matrices which are not strictly expansive. One goal of this paper is to find criteria by which matrices can easily be classified as strictly expansive. Another goal of this program is to determine whether the results on strictly expansive matrices in $n = 2$ generalize to all $n$ in a meaningful way.

Second, if $A$ is strictly expansive and $S$ is an integer matrix with determinant $\pm 1$, then $S A S^{-1}$ is a strictly expansive matrix. Given an integer matrix $A$, it can be challenging to understand its equivalence class via conjugation by matrices in $GL_n(\Z)$ \cite{EHO2019, Gru1980}. When $n = 2$, however, the situation is much more straightforward and there are techniques going back to \cite{LatMac33} for finding ``nice" representatives of matrices. Indeed, finding these representatives was crucial in proving that, when $n = 2$, if the determinant of the integer valued matrix $A$ is large enough in absolute value, and all of the eigenvalues of $A$ have modulus larger than 1, then $A$ is strictly expansive in \cite{BowSpe02MeyerType}. Determining whether a matrix is strictly expansive can be seen as a test problem for overcoming the obstacles presented by the lack of understanding of nice representatives via conjugation by matrices in $GL_n(\Z)$. 

Finally, the interplay of tilings via translations and other actions on $\R^n$ is an active area of research. Finding sets which simultaneously tile by both integer translations and $A$ dilations for various $A$ was studied in \cite{BowSpe25, DaiDiaoGuHan02, DaiLarSpe97}. Finding sets which simultaneously tile by translations along two different lattices was studied in \cite{HanWang04}. The Steinhaus problem asks when there is a set which tiles by translations along all rotations of $\Z^n$, see for example \cite{JM, KP2017, KP2022}. The work most closely related to the problem studied in this paper is \cite{GuHan2000} and \cite{BowRzeSpe2001}. In \cite{GuHan2000}, it was shown that if $A$ is an expansive, integer matrix with determinant 2, then there exists a measurable set $K$ that tiles $\R^n$ by integer translations and whose interior contains the origin such that $AK \supset K$. In \cite{BowRzeSpe2001}, this result was extended to all expansive, integer matrices. In Proposition \ref{prop:compactmra} we provide a modest improvement of these results by showing that $K$ can be chosen to also be compact. We note also that if $A$ is strictly expansive with respect to $K$, then $K$ tiles via integer translations, and $AK \setminus K$ tiles via integer dilations by $A$, see Proposition \ref{prop:tiling}.

\section{Results in \texorpdfstring{$\R^n$}{n dimensions}}

Let $K\subset \R^n$ be a compact, convex, centrally symmetric set. For $x \in \R^n$, we write $\|x\|_K$ as shorthand for the norm of $x$ considered as an element of $\R^n$ with norm whose unit ball is $K$. For $A$ an $n\times n$ matrix, we write $\|A\|_K$ as shorthand for the norm of $A$ considered as a linear operator from $(\R^n, \| \|_K) \to (\R^n, \| \|_K)$.

We say that a countable collection $\{F_i: i \in I\}$ of measurable subsets of $\R^n$ \textit{packs} $\R^n$ if 
\[
m(F_i \cap F_j) = 0
\]
whenever $i \not= j$, where $m$ is Lebesgue measure. We say the collection \textit{covers} $\R^n$ if $\cup_{i \in I} F_i = \R^n$ up to a set of measure zero. The collection \textit{tiles} $\R^n$ if it both packs and covers.

We organize some useful and related facts here.

\begin{proposition}\label{prop:tiling} Let $A$ be an $n\times n$ real (not necessarily integer) matrix.
\begin{enumerate}
\item\label{prop:tiling:a} If $A$ is expansive, then there exists an inner product on $\R^n$ such that $A(B^\circ) \supset B$, where $B = \{x\in\R^n: \left|\langle x, x \rangle\right| \le 1\}$, see e.g. Lemma 1.5.1 in \cite{Szlenk84}. 
\item\label{prop:tiling:b} If $K$ is a compact, convex, centrally symmetric set, then $A(K^\circ) \supset K$ if and only if $\|A^{-1}\|_K < 1$.
\item\label{prop:tiling:c} If $A$ is expansive, then there exists a norm on $\R^n$ under which $\R^n$ is not a Hilbert space and such that $\|A^{-1}\| < 1$.
\item\label{prop:tiling:d} If $A$ is strictly expansive and $S$ is an integer matrix with determinant $\pm 1$, then $SAS^{-1}$ is strictly expansive. 
\item\label{prop:tiling:one} If $A$ is strictly expansive relative to $K$, then $K$ contains a neighborhood of the origin.
\item\label{prop:tiling:two} If $A$ is strictly expansive relative to $K$, then $AK \setminus K$ tiles $\R^n$ via dilations by $A$.
\end{enumerate}
\end{proposition}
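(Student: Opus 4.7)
\emph{Proof strategy.} I would dispatch the six parts in order. Part~(\ref{prop:tiling:a}) is quoted directly from the cited Szlenk reference. For part~(\ref{prop:tiling:b}), unfolding definitions gives $\|A^{-1}\|_K < 1 \iff A^{-1}K \subset K^\circ \iff K \subset A(K^\circ)$; the nontrivial converse direction uses compactness of $K$ together with openness of $A(K^\circ)$ to extract $r > 1$ with $rK \subset A(K^\circ)$, hence $\|A^{-1}\|_K \le 1/r$. For part~(\ref{prop:tiling:d}), the candidate set is $SK$: it tiles $\R^n$ by integer translations because $S$ permutes $\Z^n$ and has $|\det S| = 1$, and $(SAS^{-1})((SK)^\circ) = S A(K^\circ) \supset SK$.

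Part~(\ref{prop:tiling:c}) is a perturbation. Starting from the ellipsoid $B$ produced by (\ref{prop:tiling:a}), I would invoke continuity of the map $K \mapsto \|A^{-1}\|_K$ in the Hausdorff distance on symmetric convex compact bodies containing $0$ in their interior. Since $\|A^{-1}\|_B < 1$ by (\ref{prop:tiling:b}), any symmetric convex compact $K$ sufficiently close to $B$ still satisfies $\|A^{-1}\|_K < 1$. For a concrete choice, I would take $K = B \cap \{x : |\langle x, v \rangle| \le s\}$ for a unit vector $v$ and some $s$ just below the maximum of $|\langle x, v\rangle|$ over $B$; the resulting $K$ has flat facets and is therefore not an ellipsoid, but $s$ close enough to that maximum keeps $\|A^{-1}\|_K < 1$, producing a non-Hilbert norm via (\ref{prop:tiling:b}).

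Part~(\ref{prop:tiling:one}) is the central structural step and feeds into part~(\ref{prop:tiling:two}). Rewriting the hypothesis as $A^{-1}K \subset K^\circ \subset K$, the sequence $\{A^{-n}K\}_{n \ge 0}$ is a nested decreasing family of nonempty compact sets. Any $y$ in the intersection satisfies $A^n y \in K$ for every $n \ge 0$, but expansiveness of $A$ forces $\|A^n y\| \to \infty$ unless $y = 0$; hence $\bigcap_{n \ge 0} A^{-n}K = \{0\}$, so in particular $0 \in K$, and then $0 = A^{-1} \cdot 0 \in A^{-1}K \subset K^\circ$, placing a neighborhood of $0$ inside $K$.

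For part~(\ref{prop:tiling:two}), I would set $E_n := A^n(AK \setminus K) = A^{n+1}K \setminus A^n K$ for $n \in \Z$ and verify the two tiling axioms. Packing follows because iterating $A^{-1}K \subset K$ gives $A^n K \subset A^m K$ whenever $n < m$, forcing $E_n \cap E_m = \emptyset$. Covering follows because part~(\ref{prop:tiling:one}) places $0$ in $K^\circ$, so $\bigcup_n A^n K = \R^n$ by expansiveness of $A$, while $\bigcap_n A^{-n}K = \{0\}$ from the previous paragraph; consequently every nonzero $x$ has a smallest integer $m$ with $x \in A^m K$, placing $x \in E_{m-1}$. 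The main (though modest) obstacle I anticipate is the perturbation in part~(\ref{prop:tiling:c}), which requires genuinely leaving the class of ellipsoids while preserving the strict open containment; this relies on continuity of $\|A^{-1}\|_K$ under Hausdorff perturbations of $K$. The remaining parts are largely bookkeeping once the central rearrangement $A(K^\circ) \supset K \iff A^{-1}K \subset K^\circ$ is recognized.
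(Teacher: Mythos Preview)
Your proposal is correct and follows essentially the same approach as the paper: the rearrangement $A(K^\circ)\supset K \iff A^{-1}K\subset K^\circ$ drives (\ref{prop:tiling:b}), (\ref{prop:tiling:one}), and (\ref{prop:tiling:two}) exactly as in the paper, and your treatment of (\ref{prop:tiling:d}) via the set $SK$ matches the paper's one-line hint. The only place you supply substantially more detail is (\ref{prop:tiling:c}), where the paper writes only ``follows from item~(\ref{prop:tiling:a}) and compactness''; your explicit Hausdorff-perturbation of the ellipsoid by slicing with a slab is a correct way to unpack that remark.
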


\begin{proof}
    Item (\ref{prop:tiling:b}) is a restatement of the definition of $\| \cdot \|_K$. Item (\ref{prop:tiling:c}) follows from item (\ref{prop:tiling:a}) and compactness. For (\ref{prop:tiling:d}), use that if $K$ tiles by $\Z^n$ translations and $B$ is an invertible, integer matrix, then $BK$ tiles by $B\Z^n$ translations. 

       For (\ref{prop:tiling:one}), note that $A^{-1}(K) \subset K^\circ \subset K$, and by repeating this, $A^{-j}(K) \subset K$ for all $j \ge 1$. Since $A$ is expansive, for every $x \in \R^n$, $A^{-j}x \to 0$ as $j \to \infty.$ Therefore, $0 = A(0) \in K$, so $0 \in K^\circ$.

   For (\ref{prop:tiling:two}), let $W = AK \setminus K$. If $j < k$, then $A^{j + 1}K \subset A^k K$, and 
   \[
   A^jW \cap A^kW \subset A^{j + 1}K \cap (A^{k + 1} K \setminus A^k K) = \emptyset,
   \]
    so $\{A^jW\}$ packs $\R^n$.  To see that $\{A^jW\}$ covers $\R^n$, note that
    \[
    \bigcup_{j \in \Z} A^jW  = \bigcup_{j \in \Z} A^j K \setminus \bigcap_{j \in \Z} A^{j} K = \R^n \setminus\{0\}
    \]
    with the last equality following because $A$ is expansive and part (\ref{prop:tiling:one}).
\end{proof}

Of particular interest is when $A$ is strictly expansive relative to $K = [-1/2, 1/2]^n$. From (\ref{prop:tiling:b}), $\|A^{-1}\|_\infty < 1$ if and only if $A$ is strictly expansive with respect to $K$, where $\|A\|_\infty$ is the norm of $A$ considered as an operator from $\ell_\infty^n\to \ell_\infty^n$. Estimating $\|A^{-1}\|_\infty$ based on various properties of the matrix $A$, has been considered for diagonally dominant matrices and their generalizations in, for example, \cite{CDDL2013, SWJYM1996, Var75}. We mention in particular the following theorem.

\begin{theorem}[Varah \cite{Var75}]\label{Var75}
Let $A$ be a row diagonally dominant matrix (not necessarily integer valued). Let 
\[
\alpha = \min_j \left(\left|a_{jj}\right| - \sum_{k \not= j} \left|a_{jk}\right|\right)
\]
Then, $\|A^{-1}\|_\infty \le 1/\alpha$.
\end{theorem}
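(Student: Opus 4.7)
The plan is to prove the estimate by its definition: bound $\|A^{-1}x\|_\infty / \|x\|_\infty$ from above by $1/\alpha$ for every nonzero $x$. Equivalently, by setting $y = A^{-1}x$, I want to show that $\alpha \|y\|_\infty \le \|Ay\|_\infty$ for every $y \in \R^n$. Since $\alpha > 0$ forces strict diagonal dominance, standard arguments (which I would invoke or reprove quickly) guarantee that $A$ is invertible, so the substitution is legitimate.

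Fix $y \ne 0$ and choose an index $i$ such that $|y_i| = \|y\|_\infty$. Writing out the $i$-th coordinate of $Ay$,
\[
(Ay)_i = a_{ii} y_i + \sum_{k \ne i} a_{ik} y_k,
\]
and applying the triangle inequality together with $|y_k| \le |y_i|$ gives
\[
|a_{ii}||y_i| \le |(Ay)_i| + \sum_{k\ne i}|a_{ik}||y_k| \le |(Ay)_i| + |y_i| \sum_{k\ne i}|a_{ik}|.
\]
Rearranging yields $\bigl(|a_{ii}| - \sum_{k\ne i}|a_{ik}|\bigr)|y_i| \le |(Ay)_i|$, and by the definition of $\alpha$ the left-hand side is at least $\alpha\|y\|_\infty$, while the right-hand side is at most $\|Ay\|_\infty$. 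Substituting $y = A^{-1}x$ and dividing by $\alpha$ gives $\|A^{-1}x\|_\infty \le \|x\|_\infty/\alpha$, whence $\|A^{-1}\|_\infty \le 1/\alpha$.

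The only real issue is invertibility; everything else is an exercise in the triangle inequality. Invertibility follows from the same calculation applied to $x = 0$: if $Ay = 0$ with $y \ne 0$, then picking the maximizing coordinate $i$ forces $\alpha |y_i| \le 0$, contradicting $\alpha > 0$. So the argument is essentially one display long once the index $i$ is chosen, and there is no substantive obstacle to overcome.
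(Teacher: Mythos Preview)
Your argument is correct and is the standard proof of Varah's bound: pick the coordinate of $y=A^{-1}x$ of maximal modulus, apply the triangle inequality to the corresponding row of $Ay$, and rearrange. The invertibility remark is also fine. Note, however, that the paper does not supply its own proof of this theorem; it is quoted from \cite{Var75} and used as a tool, so there is no ``paper's proof'' to compare against. Your write-up is exactly the one-line argument one would expect for this cited result.
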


\begin{corollary}
    If the columns of the integer matrix $A$ can be re-arranged so that $A$ is diagonally dominant with constant $\alpha = 2$, then $A$ is strictly expansive with respect to the unit edge length cube. 
\end{corollary}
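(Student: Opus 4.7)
The plan is to reduce the claim to Varah's theorem together with Proposition \ref{prop:tiling}(\ref{prop:tiling:b}). Let $K = [-1/2,1/2]^n$, so that $\| \cdot \|_K = \| \cdot \|_\infty$ and $K$ tiles $\R^n$ by $\Z^n$ translations. It suffices to show that $A$ is expansive and that $\|A^{-1}\|_\infty < 1$; the conclusion will then follow from Proposition \ref{prop:tiling}(\ref{prop:tiling:b}).

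The column rearrangement in the hypothesis corresponds to a permutation matrix $P$ such that $AP$ is diagonally dominant with constant $\alpha = 2$. First, I would apply Varah's theorem (Theorem \ref{Var75}) to $AP$ to conclude that
\[
\|(AP)^{-1}\|_\infty \le \tfrac12.
\]
Then I would observe that $(AP)^{-1} = P^{-1} A^{-1} = P^T A^{-1}$, which is merely a row permutation of $A^{-1}$. Since the $\infty$-operator norm equals the maximum absolute row sum, permuting rows leaves it unchanged, so $\|A^{-1}\|_\infty = \|(AP)^{-1}\|_\infty \le \tfrac12$.

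From $\|A^{-1}\|_\infty \le \tfrac12 < 1$, the spectral radius of $A^{-1}$ is at most $\tfrac12$, so every eigenvalue of $A$ has modulus at least $2 > 1$, meaning $A$ is expansive. Combined with $\|A^{-1}\|_\infty < 1$, Proposition \ref{prop:tiling}(\ref{prop:tiling:b}) yields $A(K^\circ) \supset K$, completing the proof. There is no real obstacle here; the only point requiring mild care is verifying that the column permutation on $A$ translates into a row permutation on $A^{-1}$ that preserves $\| \cdot \|_\infty$, so that Varah's bound for $AP$ transfers directly to a bound for $A$ itself.
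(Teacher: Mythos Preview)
Your argument is correct and follows essentially the same route as the paper: apply Varah's bound to $AP$ to get $\|(AP)^{-1}\|_\infty \le 1/2$, then use a permutation-invariance observation to pass this back to $A$. The only cosmetic difference is that the paper phrases the invariance geometrically (noting $PK = K$ for the cube $K$, so $AP(K^\circ)\supset K$ immediately gives $A(K^\circ)\supset K$), whereas you phrase it algebraically via the row-sum formula for the $\infty$-operator norm; these are the same fact. Your explicit verification that $A$ is expansive (via the spectral radius bound from $\|A^{-1}\|_\infty \le 1/2$) is a detail the paper leaves implicit, so your write-up is, if anything, slightly more complete.
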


\begin{proof}
    Let $P$ be a permutation matrix such that $AP$ is diagonally dominant with constant $\alpha = 2$. Then,  $\|(AP)^{-1}\| \le 1/2 < 1$, so $AP$ is strictly expansive with respect to the unit cube. 

    Since $PK = K$ for $K$ the unit cube, we are done. 
\end{proof}

Next we wish to show that strongly connected, expansive, diagonally dominant integer matrices with $\alpha = 1$ are strictly expansive. We need a few lemmas, which we prove for completeness. See, e.g., \cite{McMullen1980} for similar results.


\begin{lemma}\label{expoint}
Let $K_1$ and $K_2$ be compact, convex subsets of $\R^n$ with $K_1^\circ \cap K_2^\circ = \emptyset$. Let $x_0$ be an extreme point of $K_1$. Then, for every $\epsilon > 0$, $B_\epsilon(x_0) \not\subset K_1 \cup K_2$. 
\end{lemma}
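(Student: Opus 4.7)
The plan is to argue by contradiction. Assuming $B_\epsilon(x_0) \subset K_1 \cup K_2$, I would use Hahn--Banach separation of $K_1$ and $K_2$ to produce a hyperplane $H$ through $x_0$ with $K_1$ on one side and $K_2$ on the other, then force a flat slice of $K_1$ through $x_0$ and contradict extremality.

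First I would dispose of a degenerate case: if $K_2^\circ = \emptyset$, then $K_2$ lies in a proper affine subspace of $\R^n$ and is nowhere dense; the open dense subset $B_\epsilon(x_0) \setminus K_2$ of $B_\epsilon(x_0)$ lies in the closed set $K_1$, so $B_\epsilon(x_0) \subset K_1$, making $x_0$ interior to $K_1$---impossible for an extreme point. Assuming now that $K_2^\circ$ (and, to invoke separation, $K_1^\circ$) are nonempty, Hahn--Banach yields a hyperplane $H = \{f = c\}$ with $K_1 \subset \{f \le c\}$ and $K_2 \subset \{f \ge c\}$. Since $x_0 \in K_1$, $f(x_0) \le c$; if $f(x_0) < c$, a small ball around $x_0$ lies in $\{f < c\}$ and so avoids $K_2$, hence sits inside $K_1$, again forcing $x_0$ interior to $K_1$. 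So $f(x_0) = c$, i.e., $x_0 \in H$.

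Next I would prove $B_\epsilon(x_0) \cap H \subset K_1$. Any $y \in B_\epsilon(x_0)$ with $f(y) < c$ is not in $K_2 \subset \{f \ge c\}$, so by the covering hypothesis lies in $K_1$; each $y \in B_\epsilon(x_0) \cap H$ is the limit of such points (approached from the $\{f < c\}$ side), and $K_1$ is closed, so $y \in K_1$. Since $\dim H = n - 1 \ge 1$, I would pick any unit vector $v$ parallel to $H$; for $t > 0$ sufficiently small, both $x_0 + tv$ and $x_0 - tv$ lie in $B_\epsilon(x_0) \cap H \subset K_1$, and $x_0 = \tfrac{1}{2}\big((x_0+tv)+(x_0-tv)\big)$ is then the midpoint of two distinct points of $K_1$---contradicting extremality.

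The main subtlety is the case $f(x_0) = c$: points on the separating hyperplane $H$ could \emph{a priori} be absorbed by $K_2$, but the covering together with closedness of $K_1$ sweeps all of $B_\epsilon(x_0) \cap H$ into $K_1$, after which extremality is easily violated by moving tangentially inside $H$. The overall strategy hinges on turning the disjoint-interior hypothesis into a clean hyperplane separation and on using the definition of extreme point in its midpoint form.
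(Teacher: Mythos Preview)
Your proof is correct and follows essentially the same route as the paper: separate $K_1$ and $K_2$ by a hyperplane, use closure to push a slice of $B_\epsilon(x_0)$ into $K_1$, and then contradict extremality of $x_0$. The only differences are organizational---you treat the degenerate case $K_2^\circ=\emptyset$ explicitly and split cases on whether $f(x_0)<c$ or $f(x_0)=c$, whereas the paper splits on whether $x_0\in K_2$; and you capture only the hyperplane slice $B_\epsilon(x_0)\cap H$ in $K_1$ while the paper captures the full half-ball $B_\epsilon(x_0)\cap\{x^*\le c\}$, but either suffices.
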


\begin{proof}
Note that $x_0 \in \partial K_0$. Since $K_1^\circ \cap K_2^\circ = \emptyset$, $x_0 \not\in K_2^\circ$. If $x_0 \not \in K_2$, choose $\epsilon > 0$ such that $B_\epsilon(x_0) \cap K_2 = \emptyset$. 

The last case to consider is $x_0 \in \partial K_2$. Since $x_0$ is an extreme point, there is a linear functional $x^*$ and a real number $c$ such that $K_1 \subset \{x: x^*(x) \le c\}$ and $x^*(x) < c$ for all $x \in K_1^\circ$, while $K_2 \subset \{x:x^*(x) \ge c\}$. 

If $K_1 \cup K_2 \supset B_\epsilon(x_0)$, then $K_1 \supset B_\epsilon(x_0) \cap \{x:x^*(x) < c\}$. However, since $K_1$ is closed and $x^*(x_0) = c$, this means that $K_1 \supset B_{\epsilon}(x_0) \cap \{x:x^*(x) \le c\}$, which contradicts $x_0$ being an extreme point of $K_1$.
\end{proof}

\begin{lemma}\label{boundary}
Let $K$ be a compact set which tiles by translations. 
\begin{enumerate}
\item For each $x\in \partial K$ there exists $y\in \partial K$ such that $x - y \in \Z^n\setminus \{0\}$.
\item If in addition, $K$ is convex, then for each $x\in \ext(K)$, there exists distinct $y_1, y_2 \in \partial K$ such that $x - y_i \in \Z^n \setminus\{0\}$ for $i = 1,2$.
\end{enumerate} 
\end{lemma}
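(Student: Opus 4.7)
The plan is as follows. For part (1), I would fix $x\in\partial K$ and use closedness of $K$ to find, for each $n$, a point $w_n\in B_{1/n}(x)\setminus K$. Because $\{K+k:k\in\Z^n\}$ covers $\R^n$ up to measure zero, the nonempty open set $B_{1/n}(x)\setminus K$ meets some translate $K+k_n$ with $k_n\neq 0$. Since $K$ is compact and the $w_n$ remain bounded, only finitely many such translates can arise, so after passing to a subsequence I may assume $k_n\equiv k$. Then $w_n-k\in K$ and $w_n-k\to x-k$, so by closedness $y:=x-k\in K$ and $x-y=k\in\Z^n\setminus\{0\}$. To confirm $y\in\partial K$, I would argue by contradiction: if $y\in K^\circ$, a small ball around $y$ lies in $K^\circ$ and translates to a ball around $x$ lying in $(K+k)^\circ$, and combining this with $x\in\partial K$ produces a nonempty open set in $K^\circ\cap(K+k)^\circ$, contradicting the packing condition.

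For part (2), I would first use (1) to obtain $y_1\in\partial K$ with $x-y_1=k_1\neq 0$, noting that $x=y_1+k_1\in K+k_1$ automatically. I then apply Lemma \ref{expoint} to $K_1=K$ and $K_2=K+k_1$: both are compact convex sets, their interiors are disjoint by packing, and $x$ is an extreme point of $K_1$, so the lemma gives $B_\epsilon(x)\not\subset K\cup(K+k_1)$ for every $\epsilon>0$. Hence the set $B_{1/n}(x)\setminus(K\cup(K+k_1))$ is nonempty and open, so has positive Lebesgue measure, and by the tiling property some translate $K+k_2$ with $k_2\notin\{0,k_1\}$ meets it. Running the same limiting argument as in (1) with $k_2$ in place of $k_n$ produces $y_2=x-k_2\in\partial K$, distinct from $y_1$ because $k_1\neq k_2$.

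The main obstacle is the closing step in part (1), ensuring $y\in\partial K$ rather than merely $y\in K$: one must rule out the pathology in which $y$ lies in $K^\circ$ while $x$ sits on the boundary of a different translate, which is exactly where packing of the interiors enters. Once part (1) is established, the extra input for part (2) is entirely supplied by Lemma \ref{expoint}, which converts the extremality of $x$ into a geometric obstruction to $K\cup(K+k_1)$ covering a full neighborhood of $x$, thereby forcing a third translate $K+k_2$ to approach $x$ and providing the second preimage $y_2$.
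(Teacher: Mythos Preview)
Your approach is essentially the same as the paper's. The only cosmetic difference is that for part (1) the paper avoids your sequence-and-subsequence extraction by arguing directly that if none of the finitely many nearby translates $K+k_i$ actually contain $x$ then a small ball $B_\epsilon(x)$ escapes the covering; both proofs then place $y=x-k$ on $\partial K$ via the packing condition (the paper simply says ``it is clear''), and for part (2) both invoke Lemma~\ref{expoint} exactly as you describe to force a second translate $K+k_2$ through $x$.
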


\begin{proof}
For the first part, let $x \in \partial K$. We wish to show there is a $0\not= k \in \Z^n$ such that $x \in K + k$.  To that end, note that there exists a finite set $k_1, \ldots, k_m \in \Z^n \setminus \{0\}$ such that $d(x, K + k_i) < 1$. For each $1 \le i \le m$, if $x \not\in K + k_i$ then $d(x, K + k_i) > 0$. Choose $\epsilon < \min_{1\le i \le m} d(x, K + k_i)$ and see that 
\[B_\epsilon(x) \not\subset \cup_{k \in \Z^n} K + k,\]
a contradiction. 

Therefore, for each $x \in \partial K$, there exists $y \in K$ and $0 \not= k = k_y$ such that $x = y + k$.  It is clear, moreover, that $y\not\in K^\circ$, so $y \in \partial K$.

For the second statement, we have already shown that there exists $y_1$ with $x = y_1 + k$, where $k \in \Z^n$. By Lemma \ref{expoint}, $B_1(x) \not \subset K \cup K + k$. There are finitely many $k_1, \ldots, k_m$ not equal to $k$ such that $K + k_i \cap B_1(x) \not= \emptyset$. If none of $K + k_i$ contain $x$, then let $\epsilon < \min\{d(x, K + k_i)\}$ and see that $B_\epsilon(x) \not\subset \cup_{k \in \Z^n} (K + k)$, which contradicts $K$ tiling by translations. Therefore, for at least one $i$, $x \in K + k_i$. Therefore, $x = y_2 + k_i$, where $y_2 \not= y_1$ and $y_2 \in \partial K$ by necessity.
 \end{proof}

\begin{lemma}\label{surgery}
Let $A$ be an expansive $n\times n$ matrix. Let $K\subset \R^n$ be a compact, convex, centrally symmetric set that tiles by $\Z^n$ translations.  Suppose that there are exactly two points $x, y \in K$ such that $\|A^{-1}x\|_K = 1$ and $\|A^{-1} y \|_K = 1$. Then, $A$ is strictly expansive.
\end{lemma}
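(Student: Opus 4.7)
The plan is to modify $K$ by a small, $\Z^n$-equivariant ``cap-moving'' surgery at the two extremal points, producing a compact tile $K_\epsilon$ with $K_\epsilon \subset A(K_\epsilon^\circ)$.

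By central symmetry of $K$, $\|A^{-1}(-z)\|_K = \|A^{-1}z\|_K$, so $y = -x$; write $x_0 := x$. The convex seminorm $f(z) := \|A^{-1}z\|_K$ attains its maximum $1$ on $K$ exactly at $\pm x_0$, which forces $\pm x_0$ to be extreme points of $K$: in any non-trivial decomposition $x_0 = \lambda a + (1-\lambda)b$ with $a,b \in K$, $\lambda \in (0,1)$, convexity gives $f(a) = f(b) = 1$, hence $a, b \in \{\pm x_0\}$, but no such combination represents $x_0 \ne 0$. Applying Lemma \ref{boundary}(2) to $x_0$ produces distinct $y_1, y_2 \in \partial K$ with $k_i := x_0 - y_i \in \Z^n \setminus \{0\}$. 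Since $y_1 \ne y_2$ and neither equals $x_0$, at least one of them, call it $y^*$, satisfies $y^* \notin \{\pm x_0\}$; set $k^* := x_0 - y^*$, and note $-y^* = -x_0 + k^* \in \partial K$ by symmetry.

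For small $\epsilon > 0$, set $R_\pm := K \cap B_\epsilon(\pm x_0)$ and define (up to a measure-zero boundary adjustment so it is compact)
\[
K_\epsilon := \bigl(K \setminus (R_+ \cup R_-)\bigr) \cup (R_+ - k^*) \cup (R_- + k^*).
\]
This is a $\Z^n$-equivariant relabelling of the original tiling: the cap $R_+ \subset K$ is swapped with its translate $R_+ - k^* \subset K - k^*$, and symmetrically $R_- \subset K$ with $R_- + k^* \subset K + k^*$. Hence $K_\epsilon$ is a compact fundamental domain for $\Z^n$.

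It remains to show $K_\epsilon \subset A(K_\epsilon^\circ)$ for small $\epsilon$, which is the crux of the argument. The decisive fact is $\|A x_0\|_K > 1$, i.e., $\pm A x_0 \notin K$: indeed $\|A x_0\|_K \ge \|x_0\|_K / \|A^{-1}\|_K = 1$, and equality would place $\pm A x_0 \in K$ into the maximum set $\{\pm x_0\}$, forcing $A$ to have eigenvalue $\pm 1$ and contradicting expansiveness. In particular $y^* \ne \pm A x_0$, so for small $\epsilon$ the removed regions $A R_\pm$ (contained in shrinking neighborhoods of $\pm A x_0$) stay at a uniform positive distance from $K_\epsilon$. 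Now take any $z \in K_\epsilon$: either $z \in K \setminus \{\pm x_0\}$ and $z \in (AK)^\circ$ directly by hypothesis, or $z$ lies in a bump near $\pm y^* \in \partial K \setminus \{\pm x_0\} \subset (AK)^\circ$, so again $z \in (AK)^\circ$. A uniform ball $B_\eta(z) \subset AK$ disjoint from $A R_\pm$ can be chosen (by compactness), hence $B_\eta(z) \subset AK \setminus A(R_+ \cup R_-) \subset A K_\epsilon$. Since $A$ is a homeomorphism, $A(K_\epsilon^\circ) = (A K_\epsilon)^\circ$, so $z \in A(K_\epsilon^\circ)$. The main obstacle is exactly this verification: ensuring the surgery does not introduce a new touching point between $\partial K_\epsilon$ and $\partial A K_\epsilon$. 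The essential inputs are Lemma \ref{boundary}(2), providing an escape direction $k^*$ toward a non-extremal boundary point, and the ``exactly two max points'' hypothesis together with expansiveness, which rule out the resonance $y^* = \pm A x_0$.
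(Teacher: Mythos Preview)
Your proof is correct and follows essentially the same approach as the paper: the same cap-moving surgery at $\pm x_0$ toward an integer-translated boundary point $y^*$ obtained from Lemma~\ref{boundary}, with the same decisive observation that $Ax_0 \notin K$ (equivalently, the paper's $x_0 \notin A^{-1}K$) rules out new contact between $K_\epsilon$ and $\partial(AK_\epsilon)$. The only substantive difference is presentational: you argue on the $A$ side ($K_\epsilon \subset (AK_\epsilon)^\circ$) via compactness and ``for small $\epsilon$,'' whereas the paper works on the $A^{-1}$ side and pins down explicit constants $\epsilon_1, \epsilon_2, \delta$; your ``measure-zero boundary adjustment'' should be made concrete (e.g., use closed balls as the paper does), but the argument goes through.
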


\begin{proof}


Note that the two points $x, y$ in the lemma statement must be antipodal. We re-write the two points as $x_0$ and $-x_0$. In addition, since $1 = \|A^{-1}\|_K = \max\{\|A^{-1}x\|_K: x \in \ext(K)\}$, $x_0$ and $-x_0$ are extreme points of $K$. By Lemma \ref{boundary}, there exists $y_1 \not= y_2$ on the boundary of $K$ such that $y_i= w + k_i$. At least one of the $y_i$ is not equal to $-x_0$; we choose that one and call it $y$ and write $y = x_0 + k$.

Note that $x_0 \not\in A^{-1}K$. Indeed, if $z \in K$ and $A^{-1}z = x_0$, then by assumption $z = \pm x_0$ and $A$ is not expansive.

Let $\epsilon_1 = \frac 12 d(A^{-1}K, x_0) > 0$. Let $\delta = \min\{\frac 12 d(A^{-1}y, \partial K), \epsilon_1\} > 0$ and choose $\epsilon_2 > 0$ such that $A^{-1}\overline{B_{\epsilon_2}(y)} \subset B_\delta(A^{-1} y)$. Let $\epsilon$ be the smaller of $\epsilon_1$ and $\epsilon_2$ and let $S = \overline{B_\epsilon(x_0)} \cap K$. Let 
\[
T = \overline{\left(K \setminus (S\cup -S)\right)} \cup (S + k) \cup (-S - k) 
\]
and see that $T$ tiles by $\Z^n$ translations. 

It remains to see that $A^{-1}(T) \subset T^\circ$. First, note that $A^{-1} \left(\overline{K \setminus (S \cup -S)} \right) \subset K^\circ$ and $S \cap A^{-1}K = \emptyset$ imply that 
\[
A^{-1}\left(\overline{K \setminus (S \cup -S)}\right) \subset K^\circ \setminus (S \cup -S) \subset T^\circ.
\]
To conclude, we need to see that $A^{-1}(S + k) \subset K^\circ \setminus (S \cup -S)$. We have $A^{-1}(S + k) \subset B_\delta(A^{-1} y) \subset K^\circ$. We also need that $A^{-1}(S + k) \cap \pm S = \emptyset$. Indeed, if $z \in A^{-1}(S + k) \cap S$, then 
\begin{align*}
d(x_0, A^{-1}K) &\le d(x_0, A^{-1}y) \\
& \le d(x_0, z) + d(A^{-1}K, z) \\
&< \epsilon + \delta \le 2\epsilon \le d(x_0, A^{-1}K),
\end{align*}
a contradiction. Similar arguments hold for $-S - k$ and $-S$, and the proof is complete.
\end{proof}

In some contexts, the previous lemma is more easily verified for $A$ rather than for $A^{-1}$. This is possible via the following lemma.

\begin{lemma}\label{positivesurgery}
Let $A$ be an expansive matrix, and let $K$ be a  compact, convex, centrally symmetric set which tiles by $\Z^n$ translations. Suppose that there are exactly two points $x, y \in \partial K$ such that $\|Ax\|_K = \|Ay\|_K = 1$. Then $A$ is strictly expansive.
\end{lemma}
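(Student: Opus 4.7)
The plan is to reduce to Lemma \ref{surgery} applied with the same set $K$ and the two distinguished points $x' := Ax$ and $y' := Ay$. Once $\{x', y'\}$ is shown to be exactly the set of $z \in K$ at which $\|A^{-1}z\|_K = 1$, Lemma \ref{surgery} immediately yields that $A$ is strictly expansive.

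The crux is the one-sided bound $\|Aw\|_K \ge 1$ for every $w \in \partial K$, equivalently $A^{-1}K \subseteq K$. Granted this, the reduction runs cleanly: $x', y' \in \partial K \subseteq K$ satisfy $\|A^{-1}x'\|_K = \|x\|_K = 1$ (similarly for $y'$); conversely, any $z \in K$ with $\|A^{-1}z\|_K = 1$ has $w := A^{-1}z \in \partial K$ and $\|Aw\|_K = \|z\|_K \le 1$, which together with the bound forces $\|Aw\|_K = 1$, so $w \in \{x, y\}$ and $z \in \{x', y'\}$. Invertibility of $A$ gives $x' \ne y'$, providing exactly two such points.

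To prove the bound, consider $f(w) := \|Aw\|_K$ on the compact set $\partial K$. It is continuous, equals $1$ exactly on $\{x, y\}$, and attains maximum $\|A\|_K \ge \rho(A) > 1$ since $A$ is expansive, so $f$ exceeds $1$ somewhere. For $n \ge 3$, $\partial K \setminus \{x, y\}$ is connected, so the image under $f$ lies in a single component of $(0,1) \cup (1,\infty)$; the component $(0,1)$ is excluded by $\max f > 1$, hence $f > 1$ on $\partial K \setminus \{x, y\}$ and $f \ge 1$ on $\partial K$. For $n = 2$, central symmetry of $K$ gives $f(-w) = f(w)$, so $\{w : f(w) = 1\}$ is antipodally symmetric; since it has exactly two points $\{x, y\}$ and $x \ne 0$, we must have $y = -x$. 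The two arcs of $\partial K \setminus \{\pm x\}$ are interchanged by the antipodal map and have identical image under $f$, and the dichotomy above applied to one arc then gives $f > 1$ on both arcs. (The $n = 1$ case does not arise, since the hypothesis would force $|A| = 1$, contradicting expansiveness.)

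The main obstacle I anticipate is the $n = 2$ case, where $\partial K \setminus \{x, y\}$ is disconnected; the saving grace is that central symmetry of $K$ makes $f$ antipodally symmetric and couples the behavior on the two arcs so that they cannot separately straddle the value $1$.
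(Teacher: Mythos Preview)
Your proof is correct and follows the same reduction to Lemma~\ref{surgery} as the paper: set $x' = Ax$, $y' = Ay$, and verify that these are exactly the points of $K$ where $\|A^{-1}\,\cdot\,\|_K = 1$. You go further than the paper by explicitly establishing $\|Aw\|_K \ge 1$ for all $w \in \partial K$ (via connectivity of $\partial K \setminus \{x,y\}$ for $n \ge 3$ and antipodal symmetry for $n = 2$), which rules out interior points $z \in K^\circ$ with $\|A^{-1}z\|_K = 1$; the paper's short argument only checks boundary points and tacitly assumes this bound.
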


\begin{proof}
It is clear that $Ax$ and $Ay$ are in the boundary of $K$, as are $A^{-1}Ax$ and $A^{-1}Ay$. If there is a third point $z$ in the boundary of $K$ such that $A^{-1}z \in \partial K$, then $A^{-1}z$ is a third point in $\partial K$ for which $A(A^{-1}z) \in \partial K$, which contradicts the lemma statement. Therefore, the result follows from Lemma \ref{surgery}. 
\end{proof}

Let $A = (a_{ij})$ be an $n\times n$ matrix. We associate a directed graph $G = (V, E)$ to $A$ in the following way. The vertex set is given by $V = \{1, \ldots, n\}$. The edge $(i, j)$ is in $E$ if $a_{ij} \not= 0$.

\begin{theorem}\label{thm:connected}
  Let $A$ be an expansive, integer valued, diagonally dominant matrix whose associated graph is strongly connected. Then, $A$ is strictly expansive. 
\end{theorem}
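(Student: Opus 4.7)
The plan is to work with the unit cube $K = [-1/2, 1/2]^n$ and either apply Proposition \ref{prop:tiling}(\ref{prop:tiling:b}) to it directly or pass through the surgical modification of Lemma \ref{surgery}. First, I would observe that since $A$ has integer entries and is strictly diagonally dominant, $\alpha := \min_j\bigl(|a_{jj}| - \sum_{k \ne j}|a_{jk}|\bigr) \ge 1$, so Theorem \ref{Var75} immediately gives $\|A^{-1}\|_\infty \le 1$, equivalently $\|A^{-1}\|_K \le 1$.

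Next I would characterize the set of $z \in \partial K$ with $Az \in K$ (equivalently, the points $y = Az$ attaining $\|A^{-1}y\|_K = 1$). For such a $z$, pick any index $i_0$ with $|z_{i_0}| = 1/2$; then from $\|Az\|_\infty \le 1/2$ and the reverse triangle inequality,
\[
\tfrac12 \ge |(Az)_{i_0}| \ge |a_{i_0 i_0}||z_{i_0}| - \sum_{j \ne i_0}|a_{i_0 j}||z_j| \ge \tfrac12\bigl(|a_{i_0 i_0}| - \sum_{j \ne i_0}|a_{i_0 j}|\bigr) \ge \tfrac12.
\]
Every inequality is forced to be an equality, yielding (i) $|a_{i_0 i_0}| - \sum_{j \ne i_0}|a_{i_0 j}| = 1$, (ii) $|z_j| = 1/2$ whenever $a_{i_0 j} \ne 0$, and (iii) the sign relation $\sgn(a_{i_0 j} z_j) = -\sgn(a_{i_0 i_0} z_{i_0})$ for every such $j$, so $\sgn(z_j)$ is determined by $\sgn(z_{i_0})$.

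Strong connectivity would then propagate these conclusions across $\{1, \ldots, n\}$: every vertex is reachable from $i_0$ by a directed path in the graph of $A$, and the argument above can be re-applied at each successive vertex. This would force every coordinate of $z$ to equal $\pm 1/2$ (so $z$ is a vertex of $K$), with each sign determined from $\sgn(z_{i_0})$ by the propagation rule. Consequently, the set of admissible $z$ is either empty (if the signs are inconsistent around some directed cycle) or a single antipodal pair $\{z_0, -z_0\}$; in the latter case, equality in the displayed chain also forces $\pm A z_0 \in \partial K$.

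To conclude, I would split into two cases. In the empty case, no $y \in K$ attains $\|A^{-1}y\|_K = 1$, so $\|A^{-1}\|_K < 1$ by compactness, and Proposition \ref{prop:tiling}(\ref{prop:tiling:b}) immediately gives $A(K^\circ) \supset K$. In the other case, $\pm A z_0$ are the only two points of $K$ at which $\|A^{-1}\cdot\|_K = 1$, and Lemma \ref{surgery} produces a modified tile relative to which $A$ is strictly expansive. The hard part will be the equality-case bookkeeping in the displayed chain -- confirming that each forced equality really does propagate both the magnitude condition $|z_j| = 1/2$ and the sign relation -- and then carefully walking this information along directed paths so that strong connectivity pins $z$ down to a corner of $K$ that is unique up to an overall sign.
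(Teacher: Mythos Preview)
Your proposal is correct and follows essentially the same approach as the paper: use Varah's bound to get $\|A^{-1}\|_\infty \le 1$, then analyze the equality case by showing that any $z\in\partial K$ with $Az\in K$ must be a vertex of the cube whose sign pattern is uniquely determined (up to a global sign) via propagation along the strongly connected directed graph, and finish with the surgery lemma. The only cosmetic difference is that the paper invokes Lemma~\ref{positivesurgery} directly on the pair $\{z_0,-z_0\}$, whereas you translate to $\{Az_0,-Az_0\}$ and invoke Lemma~\ref{surgery}; these are equivalent by the proof of Lemma~\ref{positivesurgery}.
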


\begin{proof}
  Let $F$ denote the extreme points of the unit ball $B$ in $\ell_\infty^n$.  Note that $\|A^{-1}\|_\infty = \sup_{x\in F} \|A^{-1}x\|_\infty = \sup_{Ax \in  F} \|x\|_\infty$. Also, note that Theorem \ref{Var75} implies that $\|A^{-1}\|_\infty \le 1$. If the norm is strictly less than 1, we are done. What remains is the case that the norm is exactly equal to 1. 
  
By Lemma \ref{positivesurgery}, it suffices to show that there are exactly two norm one vectors in $B$ such that $\|Ax\|_\infty = 1$. Let $x\in B$ with norm 1 be such that $\|Ax\|_\infty = 1$. 

\begin{claim}
    If $|x_m| = 1$ and $(m, j) \in E$, $x_j = -\sgn(x_m a_{mm} a_{mj})$.
    
\end{claim}

\begin{proof}[Proof of claim]
    We have
    \begin{align*}
        1 & \ge |\langle Ax, e_m\rangle | \\
        &\ge |x_m a_{mm}| - \sum_{j \not= m} |x_j a_{mj}| \ge 1
    \end{align*}
    with equality if and only if $x_j = -\sgn(x_m a_{mm} a_{mj})$ whenever $a_{mj} \not = 0$ (and $|x_m a_{mm}| - \sum_{j \not= m} |x_j a_{mj}| = 1$).
\end{proof}

Continuing the proof of the theorem, there exists some $m_0$ such that $|x_{m_0}| = 1$. By the above claim and the fact that the graph associated with $A$ is connected, each $x_j$ is determined up to the sign of $x_{m_0}$. That is, there are exactly two vectors in $B$ for which $\|Ax\| = 1$, which completes the proof. 
 \end{proof}

\begin{remark}
    We point out that the proof of Theorem \ref{thm:connected} implies that many integer matrices with $\alpha = 1$ in Theorem \ref{Var75} have $\|A^{-1}\|_\infty < 1$.
\end{remark}

Now, we turn to negative results. The author does not know of any integer valued expansive matrix which is not strictly expansive. However, it is natural to ask whether $K$ can be chosen to have additional, nice properties such as convexity or symmetry (or both). In the remainder of this section, we investigate instances where this is impossible.

We start with some elementary, but useful observations.

\begin{lemma}\label{nointersect}
Let $K\subset \R^n$ be a compact set and $\Gamma \subset \R^n$ be a full rank lattice. If $\left(K - K\right) \cap \Gamma \not= \{0\}$. Then no open set containing $K$ packs by $\Gamma$ translations.
\end{lemma}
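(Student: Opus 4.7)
The plan is to translate the hypothesis into a concrete pair of points and then exhibit an open overlap between two translates of $U$. Concretely, by the assumption $(K - K) \cap \Gamma \neq \{0\}$, I can pick $x, y \in K$ with $\gamma := x - y \in \Gamma \setminus \{0\}$.

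Now let $U$ be any open set with $U \supseteq K$. Since $y \in K \subseteq U$, we have $x = y + \gamma \in U + \gamma$. On the other hand, $x \in K \subseteq U$ as well. Therefore $x \in U \cap (U + \gamma)$.

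Since both $U$ and $U + \gamma$ are open, their intersection is open and nonempty, hence has positive Lebesgue measure. Because $\gamma \neq 0$, this directly contradicts the packing condition $m\bigl((U + \gamma_1) \cap (U + \gamma_2)\bigr) = 0$ for $\gamma_1 \neq \gamma_2$ in $\Gamma$ (applied with $\gamma_1 = 0$, $\gamma_2 = \gamma$). Thus no such open $U$ can pack by $\Gamma$ translations.

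There is no real obstacle here: the only subtlety is to remember that packing is defined modulo measure zero, which is why openness of $U$ (and hence of $U \cap (U + \gamma)$) is essential — it upgrades the single common point $x$ into a set of positive measure.
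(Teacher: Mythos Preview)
Your proof is correct and follows the same idea as the paper's: pick $x,y\in K$ with $x-y\in\Gamma\setminus\{0\}$ and observe that any open $U\supseteq K$ then meets $U+\gamma$ in an open (hence positive-measure) set. If anything, your argument is slightly cleaner than the paper's, which phrases the overlap via explicit $\epsilon$-balls.
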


\begin{proof}
    Suppose $x, y \in  K$ are such that $x - y \in \Gamma \setminus \{0\}$. If $B_\epsilon(x)$ and $B_\epsilon(y)$ are subsets of $K$, then 
    \[
    \sum_{\gamma \in \Gamma} I_{B_{\epsilon}(x)} (\cdot  + \gamma) \ge 2
    \]
    and $K$ does not pack by $\Gamma$ translations.
\end{proof}

\begin{lemma}
If $K$ is a compact set that tiles $\R^n$ by translations, then 
\[
\partial K = \{x \in K: \tau_K(x) \ge 2\}
\]
and 
\[
K^\circ = \{x\in K: \tau_K(x) = 1\}
\]
\end{lemma}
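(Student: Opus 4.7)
The two displayed equalities are logically equivalent. For every $x \in K$ we have $\tau_K(x) \ge 1$, since the index $k = 0$ always contributes. Hence $\{x \in K : \tau_K(x) = 1\}$ and $\{x \in K : \tau_K(x) \ge 2\}$ partition $K$, just as $K^\circ$ and $\partial K$ partition $K$ ($K$ being closed). So it suffices to establish one of the two equalities, say $\partial K = \{x \in K : \tau_K(x) \ge 2\}$.

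The inclusion $\partial K \subseteq \{x \in K : \tau_K(x) \ge 2\}$ is an immediate restatement of Lemma \ref{boundary}(1): given $x \in \partial K$, that lemma supplies $y \in \partial K$ with $k := x - y \in \Z^n \setminus \{0\}$, and then $x \in K \cap (K + k)$ witnesses $\tau_K(x) \ge 2$. Note that this step uses only the tiling hypothesis and requires no additional regularity of $K$.

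For the reverse inclusion, suppose $x \in K$ with $\tau_K(x) \ge 2$, and pick $k \ne 0$ with $y := x - k \in K$. Assume toward contradiction that $x \in K^\circ$, and fix $B_\epsilon(x) \subset K$. Translating gives $B_\epsilon(y) = B_\epsilon(x) - k \subset K - k$, so in particular $y$ lies in the interior of $K - k$. The cleanest case is $y \in K^\circ$: pick $B_\delta(y) \subset K$, so $B_\delta(x) = B_\delta(y) + k \subset K + k$, and intersecting with $B_\epsilon(x) \subset K$ yields an open ball inside $K \cap (K+k)$. This contradicts the packing identity $m(K \cap (K+k)) = 0$, which follows from $\sum_k I_K(\cdot + k) = 1$ a.e. The delicate remaining subcase is $y \in \partial K$; here I plan to apply Lemma \ref{boundary}(1) again to $y$ to produce a further lattice vector $k'$ relating $y$ to a second boundary point, and then use the already established $B_\epsilon(y) \subset K - k$ to manufacture a positive-measure overlap between two distinct translates of $K$ inside $B_\epsilon(x)$, again contradicting the packing condition. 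I expect the main obstacle to be the bookkeeping needed to ensure the translates arising from this iterated use of Lemma \ref{boundary} are genuinely distinct and that the shrinking balls line up; once the combinatorics is in place the contradiction closes the proof.
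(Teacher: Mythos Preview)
Your reduction to a single inclusion and your appeal to Lemma~\ref{boundary} for $\partial K \subseteq \{x \in K : \tau_K(x) \ge 2\}$ match the paper exactly. For the reverse inclusion the paper simply cites Lemma~\ref{nointersect}; your explicit argument in the subcase $y = x-k \in K^\circ$ is essentially the proof of that lemma specialised to this situation, so on the part you actually carry out you and the paper agree.

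The subcase $y \in \partial K$ that you flag as ``delicate'' is a genuine gap, and the iteration you sketch cannot close it. In fact the statement is \emph{false} without further regularity on $K$. Take $n = 1$ and $K = [0,1] \cup \{3/2\}$: this set is compact and tiles $\R$ by $\Z$-translations (the isolated point contributes nothing almost everywhere), yet $1/2 \in K^\circ$ while $\tau_K(1/2) = I_K(1/2) + I_K(3/2) = 2$. Here the offending companion $y = 3/2$ lies in $\partial K$, and every neighbourhood of it meets $K$ only in a null set, so no amount of iterating Lemma~\ref{boundary} will manufacture the positive-measure overlap you are hoping for. The paper's one-line invocation of Lemma~\ref{nointersect} does not escape this either, since that lemma speaks only of open sets \emph{containing} $K$; in the counterexample every such open set indeed fails to pack, while $K$ itself still tiles. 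The lemma becomes correct under the mild extra hypothesis $K = \overline{K^\circ}$: then $y \in K$ is a limit of interior points, so one may choose $y' \in K^\circ$ close enough to $y$ that $y' + k \in B_\epsilon(x) \subset K^\circ$, which reduces to your easy subcase and finishes the argument.
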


\begin{proof}
    The first statement is a restatement of Lemma \ref{boundary}, and the second statement is a restatement of Lemma \ref{nointersect}.
\end{proof}

The following lemma will be applied to points in the boundary of $K$, but is valid for all points in $K$.

\begin{lemma}\label{threepoints}
Let $K\subset \R^n$ be compact, and let $\Gamma \subset \Z^n$ have index two in $\Z^n$. If there exist distinct points $x, y, z \in K$ such that $x - y \in \Z^n$ and $x - z\in \Z^n$, then no open set which contains $K$ can pack by $\Gamma$ translations.
\end{lemma}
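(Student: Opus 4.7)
The plan is to reduce the statement to Lemma \ref{nointersect} by producing a nonzero element of $(K - K) \cap \Gamma$ from the three points $x, y, z$. The input we have is three pairwise differences $x - y$, $x - z$, $y - z$, each of which lies in $\Z^n$ (the first two by hypothesis, and the third since $y - z = (x - z) - (x - y)$). The hypothesis on $\Gamma$ lets us reduce everything modulo $\Gamma$: because $\Gamma$ has index $2$ in $\Z^n$, the quotient $\Z^n / \Gamma$ has exactly two elements, so there is a unique nontrivial coset $\Gamma + v$, and every element of $\Z^n$ is either in $\Gamma$ or in $\Gamma + v$.

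The core step is then a quick pigeonhole argument on the three differences. If any one of $x - y$, $x - z$, $y - z$ lies in $\Gamma$, then because $x, y, z$ are distinct, that difference is a nonzero element of $(K - K) \cap \Gamma$, and Lemma \ref{nointersect} immediately gives the conclusion. Otherwise all three differences lie in $\Gamma + v$; but then the identity $(x - y) + (y - z) = x - z$ forces $2v \equiv v \pmod{\Gamma}$, i.e.\ $v \in \Gamma$, contradicting that $\Gamma + v$ is the nontrivial coset. I do not expect a substantive obstacle: the only real work is the packaging, since Lemma \ref{nointersect} already does the measure-theoretic heavy lifting, and the coset argument is a one-line consequence of having index exactly $2$.
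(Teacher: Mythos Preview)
Your proposal is correct and follows essentially the same approach as the paper: reduce to Lemma \ref{nointersect} by observing that among the three nonzero differences $x-y$, $x-z$, $y-z$ in $\Z^n$, at least one must lie in $\Gamma$ because $\Gamma$ has index $2$. The paper phrases this directly (if $x-y,\,x-z \notin \Gamma$ then $y-z = (x-z)-(x-y) \in \Gamma$) rather than by contradiction, but the content is the same.
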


\begin{proof}
By Lemma \ref{nointersect}, it suffices to show that there exists distinct points $x, y \in K$ such that $0\not= x - y \in \Gamma$. 

If $k_1 = x - y \in \Z^n \setminus \Gamma$ and $k_2 = x - z \in \Z^n \setminus \Gamma$, then $y - z = k_2 - k_1 \in \Gamma$ since $\Gamma$ is index 2, and we are done.
\end{proof}

\begin{lemma}
Let $K$ be a compact set that tiles $\R^n$ by translations. Then
\[
\tau_K(x) = \sum_{k \in \Z^n} I_K(x + k) \ge 1
\]
for \textbf{all} $x \in \R^n$.
\end{lemma}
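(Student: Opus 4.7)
The plan is to upgrade the almost-everywhere covering statement to a pointwise one by a compactness and limiting argument. From the tiling hypothesis we already know $\tau_K(y) \ge 1$ for almost every $y \in \R^n$. Fix an arbitrary $x \in \R^n$; the task is to exhibit some $k \in \Z^n$ with $x + k \in K$.

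First I would choose a sequence $x_j \to x$ such that $\tau_K(x_j) \ge 1$ for each $j$; this is possible because the set $\{y : \tau_K(y) \ge 1\}$ has full measure and is therefore dense. For each $j$ pick $k_j \in \Z^n$ with $x_j + k_j \in K$. Since $K$ is compact and $(x_j)$ is bounded, the identity $k_j = (x_j + k_j) - x_j$ shows that the integer vectors $k_j$ lie in a bounded subset of $\Z^n$, hence take only finitely many distinct values. Passing to a subsequence, I may assume $k_j = k$ is constant.

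Then $x_j + k \in K$ for all $j$ in the subsequence, and since $K$ is closed I conclude $x + k = \lim_j (x_j + k) \in K$, so $\tau_K(x) \ge 1$. This completes the proof.

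There is no real obstacle here: the argument is entirely driven by the fact that compactness of $K$ forces the relevant integer translates to be drawn from a finite set, after which closedness lets one pass to the limit. The only subtlety worth noting is that this argument genuinely uses compactness of $K$, not merely boundedness or closedness — without it the integer translates $k_j$ need not be bounded.
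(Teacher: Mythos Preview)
Your proof is correct and follows essentially the same idea as the paper's: both arguments exploit that only finitely many integer translates of the compact set $K$ can come near a given point, and then use closedness to conclude. The paper packages this as a contradiction---if $x$ were missed by every translate, then the finitely many nearby translates form a closed set avoiding $x$, so a whole ball around $x$ would be uncovered---whereas you reach the same conclusion via a direct sequential/pigeonhole argument.
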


\begin{proof}
Suppose $\tau_K(x) = 0$. Let $k_1, \ldots, k_L$ denote the finitely many integers such that $K + k_i$ is within 1 of $x$. For $0 < \epsilon < 1$, 
\[
B_\epsilon(x) \cap \bigcup_{k \in \Z^n} (K + k) = B_\epsilon(x) \cap \bigcup_{i = 1}^L (K + k_i).
\] 
Since $\bigcup_{i = 1}^L (K + k_i)$ is compact, if $x \not\in  \bigcup_{i = 1}^L (K + k_i)$, then there exists $0 < \epsilon < 1$ such that $B_\epsilon(x) \cap \bigcup_{i = 1}^L (K + k_i) = \emptyset$. It follows $\tau_K(x) = 0$ for all $x \in B_\epsilon(x)$, contradicting the assumption of the lemma.
\end{proof}

\begin{theorem}\label{index2}
Let $\Gamma \subset \Z^n$ be a full-rank lattice with index 2. Let $K \subset \R^n$ be a compact set which tiles by $\Z^n$ translations. If either
\begin{enumerate}
\item $K$ is symmetric, or
\item $K$ is convex, or
\item $\partial K$ is connected
\end{enumerate}
then no open set which contains $K$ packs $\R^n$ by $\Gamma$ translations. 
\end{theorem}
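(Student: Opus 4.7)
The plan is to reduce each of the three cases to Lemma \ref{nointersect} or Lemma \ref{threepoints}, by producing either two distinct points of $K$ whose difference is a nonzero element of $\Gamma$, or three distinct points of $K$ whose pairwise differences all lie in $\Z^n$.

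The convex case is the quickest. Since $K$ is nonempty, compact, and convex, the Krein--Milman theorem supplies an extreme point $x \in K$. Lemma \ref{boundary}(2) then provides distinct $y_1, y_2 \in \partial K$ with $x - y_i \in \Z^n \setminus \{0\}$, so the triple $x, y_1, y_2$ satisfies the hypothesis of Lemma \ref{threepoints}.

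For the symmetric case, the strategy is an averaging argument. Since $[\Z^n:\Gamma]=2$, every $2k$ with $k \in \Z^n$ maps to the identity in $\Z^n/\Gamma$, so $2\Z^n \subseteq \Gamma$, and the tower law gives $[\Gamma:2\Z^n] = 2^{n-1}$. I pick any $v \in \Gamma \setminus 2\Z^n$, which exists as long as $n \ge 2$. Because $K$ is closed and $K + \Z^n = \R^n$, write $v/2 = y + k$ for some $y \in K$ and $k \in \Z^n$. By symmetry $-y \in K$, and
\[
2y \;=\; y - (-y) \;=\; v - 2k
\]
lies in $\Gamma$ because both $v$ and $2k$ do. Moreover $2y \not= 0$, for otherwise $v = 2k \in 2\Z^n$, contradicting the choice of $v$. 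Hence $(K-K) \cap \Gamma \not= \{0\}$ and Lemma \ref{nointersect} finishes this case.

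The connected-boundary case is where I expect the main obstacle. The plan is to show that $\tau_K(x) \ge 3$ for some $x \in \partial K$, after which $x$ together with two of its partners provides the triple for Lemma \ref{threepoints}. Suppose for contradiction that $\tau_K(x) = 2$ for every $x \in \partial K$. Then each boundary point has a unique partner $\psi(x) \in \partial K$, and $\theta(x) := \psi(x) - x \in \Z^n \setminus \{0\}$. The uniqueness, together with the observation that $\psi(x)$ itself has $x$ as a partner, shows that $\psi$ is an involution. The delicate step is the continuity of $\theta$: for $x_m \to x$ in $\partial K$, the $\Z^n$-valued sequence $\theta(x_m)$ is bounded (partners lie in the compact set $K$), so along a subsequence it is eventually equal to some $k^\ast$; then $\psi(x_m) \to x + k^\ast$, which lies in the closed set $\partial K$, and by uniqueness of partners $\theta(x) = k^\ast$. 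Since $\partial K$ is connected and $\theta$ takes values in the discrete set $\Z^n$, $\theta$ is constant, equal to some $k^\ast \not= 0$. But then $\psi$ is translation by $k^\ast$, and $\psi \circ \psi = \mathrm{id}$ forces $2k^\ast = 0$, contradicting $k^\ast \not= 0$. Once the continuity of $\theta$ is established, connectedness collapses the partner structure to a single translation and then to a contradiction, which is the heart of the argument.
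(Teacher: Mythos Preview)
Your argument is correct in all three cases, and in the convex and connected-boundary cases it follows essentially the same route as the paper: for convexity you use Lemma~\ref{boundary}(2) exactly as the paper does, and for the connected boundary your ``locally constant $\theta$'' argument is the function-language version of the paper's closed-cover argument (the paper sets $B_k=\{x\in\partial K:\ x+k\in\partial K\}$, notes these are finitely many closed sets covering $\partial K$ with at least two of them nonempty, and uses connectedness to force an intersection $B_{k_1}\cap B_{k_2}\neq\emptyset$; your fibers $\theta^{-1}(k)$ are precisely the $B_k$ under the $\tau_K\equiv 2$ hypothesis).

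Where you genuinely diverge is the symmetric case. The paper puts $\Gamma$ into Hermite normal form, permutes coordinates so that the first column is $(2,\ast,\ldots,\ast)^T$, picks the specific point $\tfrac12 e_2$, and then verifies by hand that the resulting vector $2x+2k$ is an integer combination of the Hermite columns. Your observation that $[\Z^n:\Gamma]=2$ forces $2\Z^n\subseteq\Gamma$, so that \emph{any} $v\in\Gamma\setminus 2\Z^n$ furnishes a point $y\in K$ with $2y\in\Gamma\setminus\{0\}$, is cleaner and coordinate-free; it isolates exactly the arithmetic fact being used and avoids the normal-form bookkeeping entirely. The paper's approach, by contrast, makes the structure of $\Gamma$ explicit, which may be pedagogically useful but is not needed for the proof. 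One small point: when you write ``$K+\Z^n=\R^n$'' to place $v/2$ in a translate of $K$, you are implicitly invoking the lemma that $\tau_K\ge 1$ \emph{everywhere} for compact tiles, which the paper states separately; it is worth citing.
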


\begin{proof}
Suppose $K$ is symmetric and assume $n > 1$. Let $H$ be an integer matrix in column Hermite normal form such that $\det(H) = 2$ and $H\Z^n = \Gamma$, so that $\Gamma$ is generated by the columns of $H$. At most one column is not equal to the corresponding unit vector basis element, so we can conjugate by permutations so that $h_{11} = 2$. If there are any off-diagonal positive values in $H$, they must be ones and in the first column. By another permutation we can assume $h_{i1} = 1$ for $i \in I$, where $I \subset \{2, \ldots, n\}$ is possibly empty. Let $x = \frac 12 e_2$, where $\{e_1, \ldots, e_n\}$ is the standard unit vector basis for $\R^n$.  There exists $k \in \Z^n$ such that $x + k$ and $-x - k$ are in $K$, and $0 \not= 2x + 2k \in \left(K - K\right).$ To finish the proof, we need to show that $2x + 2k \in \Gamma \setminus \{0\}$, and the result will follow from Lemma \ref{nointersect}.

Indeed, write $2x + 2k = (v_1, \ldots, v_n)$ and see that $v_2 \not= 0$ and
\[
  2x + 2k =  k_1 h_1 +  \sum_{i \in I} \left(v_i - k_1\right) h_i + \sum_{i \in ([2, n] \setminus I)} v_i h_i  
  \]
 as desired.

For the second item, Minkowski showed that if a convex body tiles $\R^n$ via translations, then the body is necessarily centrally symmetric, see e.g. Lemma 2 in \cite{McMullen1980} for a proof. For a direct proof, suppose $K$ is compact and convex. By Lemma \ref{boundary}, there exist distinct $y_1, y_2, y_3 \in \partial K$ with $y_i - y_j \in \Gamma$. The result then follows from Lemma \ref{threepoints}.

For the third item, for each $0\not= k \in \Z^n$, let $B_k = \{x: x \in \partial K, x + k \in \partial K\}$. By Lemma \ref{boundary}, each $x\in \partial K$ is in at least one $B_k$. Note also that if $x\in B_k$, then $x + k \in B_{-k}$, so there are at least two $k$ such that $B_k$ is non-empty.

Since $K$ is bounded, only finitely many of the $B_k$ are non-empty. It is easy to see that each $B_k$ is closed as well. Since $\partial K$ is connected, it cannot be written as the finite union of two or more disjoint, non-empty closed sets. Therefore, we have that there exists $x \in \partial K, k_1 \not= k_2  \in \Z^n$ such that $x \in B_{k_1} \cap B_{k_2}$. The result then follows from Lemma \ref{nointersect}. 
\end{proof}

\begin{theorem}\label{thm:det2}
Let $A$ be an expansive, integer valued $n\times n$ matrix with determinant 2. Then 
\begin{enumerate}
\item there is no compact, centrally symmetric set $K$ which tiles $\R^n$ by integer translations such that $AK^\circ \supset K$.
\item there is no compact, convex set $K$ which tiles $\R^n$ by translations such that $AK^\circ \supset K$, and
\item there is no compact set $K$ whose boundary is connected and which tiles $\R^n$ by translations such that $AK^\circ \supset K$.
\end{enumerate}
\end{theorem}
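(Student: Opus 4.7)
The plan is to deduce Theorem \ref{thm:det2} as a direct consequence of Theorem \ref{index2} by taking $\Gamma = A\Z^n$. Since $A$ is an integer matrix with $|\det(A)| = 2$, the image lattice $A\Z^n$ is a full-rank sublattice of $\Z^n$ of index $2$, which is precisely the hypothesis on $\Gamma$ in Theorem \ref{index2}. So the whole game is to convert the dilation hypothesis $AK^\circ \supset K$ into the packing hypothesis required by Theorem \ref{index2}.

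First, I would argue by contradiction: assume there exists a compact $K$ tiling $\R^n$ by $\Z^n$ translations with $AK^\circ \supset K$ and satisfying one of the three listed properties (symmetric, convex, or connected boundary). I would then show that the open set $U := AK^\circ$ packs $\R^n$ by $\Gamma = A\Z^n$ translations. Certainly $U \supset K$ is open. To see it packs by $A\Z^n$, note that $A$ maps the $\Z^n$-tiling $\{K + k : k \in \Z^n\}$ to the collection $\{AK + A k : k \in \Z^n\}$, and for $k \neq 0$,
\[
m\bigl((AK + Ak_1) \cap (AK + Ak_2)\bigr) = |\det(A)|\, m\bigl((K + k_1) \cap (K + k_2)\bigr) = 0
\]
whenever $k_1 \neq k_2$. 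In particular $AK$ packs by $A\Z^n$ translations, and since $U = AK^\circ \subset AK$, the open set $U$ packs by $A\Z^n$ translations as well.

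Now I apply Theorem \ref{index2} with $\Gamma = A\Z^n$ and the compact tiling set $K$. Under any one of the three side conditions on $K$, Theorem \ref{index2} asserts that no open set containing $K$ can pack $\R^n$ by $\Gamma$ translations, contradicting the fact that $U = AK^\circ$ does so. This contradiction proves each of the three parts simultaneously.

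Honestly, I don't expect a major obstacle here; the heavy lifting was done in Theorem \ref{index2} and in Lemmas \ref{nointersect}--\ref{threepoints}, and the role of the determinant-$2$ hypothesis is only to guarantee that $A\Z^n$ has index $2$ in $\Z^n$. The one place to be careful is confirming that the set $U = AK^\circ$ is genuinely open and genuinely contains $K$ (both immediate from $A$ being invertible and from the standing hypothesis $AK^\circ \supset K$), and that the packing property passes from $AK$ to its subset $AK^\circ$, which is routine.
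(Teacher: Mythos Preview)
Your proposal is correct and follows essentially the same approach as the paper: apply Theorem \ref{index2} with $\Gamma = A\Z^n$, noting that $A\Z^n$ has index $2$ in $\Z^n$ and that $AK^\circ$ is an open set containing $K$ which packs by $A\Z^n$ translations (since $AK$ does). Your write-up is in fact more explicit than the paper's, which dispatches the argument in two sentences.
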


\begin{proof}
Let $K$ be a compact, symmetric set which tiles $\R^n$ by $\Z^n$ translations. 
Note that $AK$ tiles $\R^n$ by $A\Z^n$ translations, and $A\Z^n$ is index 2 in $\Z^n$, so by Theorem \ref{index2}, $AK^\circ$ does not contain $K$. The other statements follow similarly.
\end{proof}

We end this section by stating and proving the promised improvement to \cite{BowRzeSpe2001, GuHan2000}.

\begin{lemma}\label{lemma:closedsets}
  Let $\tilde G$ be a compact subset of $\R^n$ such that $m(\partial \tilde G) = 0$. There exists a compact subset $G$ of $\tilde G$ with $m(\partial G) = 0$ such that $G$ packs $\R^n$ by integer translations and
 \[
 \bigcup_{k \in \Z^n} (G + k) = \bigcup_{k \in \Z^n} (\tilde G + k) \qquad a.e.
 \]
\end{lemma}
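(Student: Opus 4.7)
The plan is to carve a compact subset $G$ out of $\tilde G$ by selecting, in a measurable way, exactly one representative from each $\Z^n$-orbit that meets $\tilde G^\circ$, and then taking the closure of that selection. Working with the interior $U_0 := \tilde G^\circ$ is harmless because $m(\tilde G \setminus U_0) = m(\partial \tilde G) = 0$ by hypothesis.

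Fix a total order $\prec$ on $\Z^n$ and define
\[
V = \bigl\{x \in U_0 : x + j \notin U_0 \text{ for every } j \in \Z^n \text{ with } j \succ 0\bigr\},
\]
so that $V$ picks out the $\prec$-smallest preimage in $U_0$ from each orbit meeting $U_0$. The verifications I would carry out are:
\begin{enumerate}
\item[(i)] $V \cap (V + k) = \emptyset$ for every nonzero $k \in \Z^n$. A short case analysis: if $k \succ 0$ and $x - k \in V$, then the defining condition applied with $j = k$ forces $x = (x-k) + k \notin U_0$, contradicting $x \in V \subset U_0$; the case $k \prec 0$ is symmetric with $j = -k$.
\item[(ii)] $\bigcup_{k \in \Z^n}(V + k) = \bigcup_{k \in \Z^n}(U_0 + k)$. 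For each $x$ in the right-hand side, the set $\{k : x - k \in U_0\}$ is finite and nonempty since $U_0$ is bounded; its $\prec$-minimum $k^*$ satisfies $x - k^* \in V$.
\item[(iii)] $m(\partial V) = 0$. Boundedness of $U_0$ implies that only finitely many $j \succ 0$ satisfy $(U_0 - j) \cap \overline{U_0} \neq \emptyset$, so inside any bounded neighborhood of $U_0$ the set $V$ equals $U_0$ minus a \emph{finite} union of translates $U_0 - j_i$. Consequently $\partial V \subset \partial U_0 \cup \bigcup_i (\partial U_0 - j_i)$ lies in a finite union of translates of $\partial \tilde G$, all of measure zero.
\end{enumerate}

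Now set $G := \overline{V}$. Then $G \subset \overline{U_0} \subset \tilde G$ is compact. Since $V^\circ \subset (\overline V)^\circ$ and $m(\partial V) = 0$, we get $m(V^\circ) = m(\overline V)$, so $m(\partial G) \leq m(\overline V \setminus V^\circ) = 0$. Strict packing from (i) upgrades to measure-zero packing of $G$ via $\overline V \cap (\overline V + k) \subset \partial V \cup (\partial V + k)$. Finally, (ii) together with $m(\tilde G \setminus U_0) = 0$ yields
\[
\bigcup_{k \in \Z^n}(G + k) = \bigcup_{k \in \Z^n}(\tilde G + k) \quad \text{a.e.}
\]

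The main obstacle I anticipate is step (iii) together with the passage from $V$ to $\overline V$: the selection boundary introduced by the order-based cutting must stay inside translates of $\partial \tilde G$, and taking closure afterwards must not inflate $\partial G$ beyond a null set. Both parts depend on propagating the hypothesis $m(\partial \tilde G) = 0$ through finitely many set operations and one closure, which is why boundedness of $\tilde G$ is essential to keep the relevant collection of $j$'s finite.
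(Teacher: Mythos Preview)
Your approach differs from the paper's: the paper enumerates $\Z^n$ as $\{k_j\}$, intersects $\tilde G$ with the translated cubes $T + k_j$ where $T = [-1/2,1/2]^n$, and inductively defines closed sets $C_m$ as the part of $\tilde G \cap (T+k_m)$ not already represented (up to closure) by earlier $C_i$'s; then $G = \bigcup_m C_m$. Your direct order-based selection from $U_0 = \tilde G^\circ$ is cleaner and avoids the auxiliary cube tiling.

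There is, however, a real gap in step (ii). The assertion that the $\prec$-minimum $k^*$ of $S = \{k : x - k \in U_0\}$ lands $x - k^*$ in $V$ unpacks to: for every $j \succ 0$, $k^* - j \notin S$. This follows only if the order is translation-invariant, so that $j \succ 0 \iff k^* - j \prec k^*$. For an arbitrary total order it can fail outright: on $\Z$ with $0 \prec 1 \prec -1 \prec 2 \prec -2 \prec \cdots$, every nonzero $j$ satisfies $j \succ 0$, so $V$ reduces to those $x \in U_0$ with \emph{no} other integer translate in $U_0$; taking $U_0 = (0.1, 1.9)$ gives $V = [0.9, 1.1]$, whose integer translates miss most of $\R$. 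The fix is easy---take $\prec$ to be lexicographic, or any group-compatible total order on $\Z^n$---and with that amendment your steps (i)--(iii) and the passage to $G = \overline V$ go through exactly as you describe.
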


\begin{proof}
    Let $\{k_j\}_{j = 1}^\infty$ denote an enumeration of $\Z^n$. Let $T = [-1/2, 1/2]^n$. We define a collection of closed sets as follows. Let $C_1 = (T + k_1) \cap \tilde G$ and see that $m(\partial C_1) = 0$ and $C_1$ is closed.

    For $m \ge 2$, we define
    \[
    C_m = \tilde G \cap \left(T + k_m\right) \cap \overline{\left(\bigcup_{i = 1}^{m - 1}(C_i - k_i + k_m)\right)^C}
    \]

    The collection $\{C_i\}$ satisfies the following easily verified conditions
    \begin{enumerate}
        \item for every $i$, $C_i \subset T + k_i$,
        \item for every $i \not= j$, $m\left((C_i - k_i) \cap (C_j - k_j)\right) = 0$,
        \item for only finitely many $i$ is $C_i \not= \emptyset$,
        \item for every subset of $\tilde G$ of positive measure, there exists a further subset $H$ of positive measure, $k \in \Z^n$, and $i$ such that $H  + k \subset C_i$, and 
        \item $m(\partial C_i) = 0$ for every $i$.
    \end{enumerate}

    Let $G = \bigcup_{i = 1}^\infty C_i$. Items (1) and (2) together imply that $G$ packs $\R^n$ by integer translations. Item (3) and the fact that $C_i$ is closed for each $i$ implies that $G$ is compact. Item (4) implies that 
    \[
    \bigcup_{k \in \Z^n} (G + k) = \bigcup_{k \in \Z^n} (\tilde G + k) \qquad a.e.,
    \]
    and item (5) implies $m(\partial G) = 0$. 
\end{proof}


\begin{proposition}\label{prop:compactmra}
    Let $A$ be an expansive, integer matrix. There exists a compact set $K$ with $0 \in K^\circ$ which tiles $\R^n$ via integer translations such that $AK \supset K$. 
\end{proposition}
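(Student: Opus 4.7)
The plan is to bootstrap from the (possibly non-compact) measurable tile furnished by \cite{BowRzeSpe2001, GuHan2000}, using Lemma~\ref{lemma:closedsets} to regularize it to a compact tile.

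First, I would invoke those references to obtain a bounded measurable set $\tilde K \subset \R^n$ that tiles $\R^n$ by $\Z^n$ translations, contains $0$ in its interior, and satisfies $A\tilde K \supset \tilde K$. Setting $\tilde G = \overline{\tilde K}$, the closure is compact; $\tilde G^\circ \supset \tilde K^\circ \ni 0$; we have $A\tilde G = \overline{A\tilde K} \supset \overline{\tilde K} = \tilde G$ because $A$ is a homeomorphism of $\R^n$; and $\bigcup_{k \in \Z^n}(\tilde G + k) = \R^n$, since $\tilde K \subset \tilde G$ already covers. The only feature of a tile that passing from $\tilde K$ to $\tilde G$ can destroy is the packing property, which fails exactly when two integer translates of $\tilde G$ overlap in positive measure, i.e.\ when $m(\partial \tilde K)>0$.

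Second, assuming $m(\partial \tilde G) = 0$, I would apply Lemma~\ref{lemma:closedsets} to $\tilde G$ to obtain a compact subset $K \subset \tilde G$ that packs $\R^n$ by $\Z^n$ translations and has $\bigcup_k(K+k) = \R^n$ a.e., so $K$ tiles. The crucial observation is that because $\tilde K$ already tiles by $\Z^n$, the pieces $\{(\tilde K \cap (T + k)) - k\}_{k \in \Z^n}$ almost-disjointly partition $T$; so in the construction of Lemma~\ref{lemma:closedsets} the carving step $C_m = \tilde G \cap (T + k_m) \cap \overline{\left(\bigcup_{i<m}(C_i - k_i + k_m)\right)^c}$ removes only null sets from each $\overline{\tilde K} \cap (T + k_m)$, and the output $K$ agrees with $\overline{\tilde K}$ up to a null set. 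Choosing the enumeration with $k_1 = 0$ forces $C_1 \supset T \cap \tilde K^\circ$, so $0 \in K^\circ$; and the a.e.\ identification $K = \overline{\tilde K}$ transports $AK \supset K$ from $A\overline{\tilde K} \supset \overline{\tilde K}$.

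The main obstacle I anticipate is the boundary hypothesis $m(\partial \tilde K) = 0$ needed to invoke Lemma~\ref{lemma:closedsets}. I expect this to follow from the piecewise/iterative character of the constructions in \cite{BowRzeSpe2001, GuHan2000}, but if extracting it is awkward, a workaround is to apply Lemma~\ref{lemma:closedsets} instead to a thickened compact set $\tilde G' \supset \tilde K$ with smooth boundary (for instance $\tilde K$ unioned with a thin annulus around it) and then use the same null-set identification argument to show the resulting $K$ still agrees a.e.\ with $\overline{\tilde K}$, so the $A$-dilation property and the interior-point property transfer.
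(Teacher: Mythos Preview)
Your approach has a genuine gap at the last step, where you claim that the almost-everywhere identification $K = \overline{\tilde K}$ ``transports $AK \supset K$ from $A\overline{\tilde K} \supset \overline{\tilde K}$.'' It does not. The inclusion $AK \supset K$ is a pointwise, set-theoretic statement, and it is not stable under modification by null sets. Concretely, with $K \subset \tilde G := \overline{\tilde K}$ and $m(\tilde G \setminus K) = 0$, take any $p \in \tilde G \setminus K$ (such points can and typically will exist: the carving in Lemma~\ref{lemma:closedsets} removes boundary pieces of the cubes $T + k_m$, which are null sets, but they are genuinely removed as sets). Since $A\tilde G \supset \tilde G$, the point $Ap$ lies in $A\tilde G$; if $Ap$ happens to land in $K$, then $Ap \in K$ while $A^{-1}(Ap) = p \notin K$, so $AK \not\supset K$. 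Nothing in the construction of Lemma~\ref{lemma:closedsets} synchronizes the null set it excises with the action of $A$, so there is no reason this situation is avoided. The same objection applies to your proposed workaround with a thickened set $\tilde G'$: once you pass through Lemma~\ref{lemma:closedsets}, you lose control of which boundary points survive, and the dilation inclusion is exactly a statement about boundary points.

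This is why the paper's proof is organized differently. Rather than starting from the measurable tile of \cite{BowRzeSpe2001, GuHan2000} and trying to regularize it, the paper builds $K$ from scratch as a nested, finite union $K = \bigcup_m E_m$, beginning with an ellipsoid $E_1 = Q$ satisfying $AQ^\circ \supset Q$ and inductively carving each $E_m$ out of $AE_{m-1}$ \emph{before} invoking Lemma~\ref{lemma:closedsets}. Because every $E_m$ is by construction a subset of $AE_{m-1} \subset AK$, the inclusion $AK \supset K$ holds pointwise at every stage and is never put at risk by an a.e.\ identification. The induction on the first $J$ with $A^{-J}x \in Q$ then shows the process terminates in finitely many steps, giving compactness. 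If you want to salvage your strategy, you would need an $A$-equivariant version of Lemma~\ref{lemma:closedsets}; absent that, the constructive route is the natural one.
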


\begin{proof}

    Let $Q \subset [-1/2, 1/2]^n$ be an ellipsoid such that $AQ^\circ \supset Q$. Let $E_1 = Q$. 

    \[
    \tilde E_m = \overline{AE_{m - 1} \setminus \left(\bigcup_{i = 1}^{m - 1}\bigcup_{k \in \Z^n} \left(E_{m - 1} + k\right)\right)}.
    \]
    Note that there are only finitely many $k \in \Z^n$ for which $(E_{m - 1} + k) \cap AE_{m-1} \not= \emptyset$. Let $E_m$ be the subset of $\tilde E_m$ guaranteed to exist by Lemma \ref{lemma:closedsets}. Let $K = \cup_{i = 1}^\infty E_i$. 
    
    It is clear that $AK \supset K$ and that $0 \in K^\circ.$ It also follows from the definition of $\tilde E_m$ and Lemma \ref{lemma:closedsets} that $K$ packs $\R^n$ via translations. It remains to show that $K$ is compact and covers $\R^n$ by integer translations. We will do so by showing by induction on $J$ that for almost all $x \in \R^n$ if $J$ is the smallest nonnegative integer such that $A^{-J}x \in Q$, then $x + k \in \cup_{i = 1}^JE_i$ for some $k \in \Z^n$. This implies, in particular, that $K = \cup_{i = 1}^m E_i$ for some $m$, so $K$ is compact.

    The case $J = 1$ is clear.  Assume true for $J - 1 \ge 1$. For $x$ where $J$ is the smallest integer such that $A^{-j}x \in Q$, $y = A^{-1}x$ has $J - 1$ as the smallest integer for which $A^{-j}y \in Q$. For almost every such $y$, there exists $k\in \Z^n$ such that $y + k \in E_{m_0}$ for some $m_0 \le J - 1$. If $A(y + k) \in \tilde E_{m_0 + 1}$, then there exists $k_2$ such that $A(y + k) + k_2 \in E_{i + 1}$. Therefore, $x + Ak + k_2 \in E_{i + 1}$, and we are done.
    
    If $A(y + k) \not \in \tilde E_{m_0 + 1}$ then there exists $m_1 < m_0 + 1$ and $k_3$ such that $Ay + Ak + k_3 \in E_{m_1}$, which finishes the proof.
\end{proof}

\section{Two dimensional case}

The situation in two dimensions is considerably easier. In this section, we characterize the integer matrices which are strictly expansive with respect to a convex, centrally symmetric set. As mentioned in the proof of Theorem \ref{index2}, convex sets which tile $\R^n$ via translations are necessarily centrally symmetric; however, we will not omit the words ``centrally symmetric" in the sequel. We start by summarizing the results in \cite{BowSpe02MeyerType}. First, an integer matrix $A$ with trace $t$ and determinant $d$ is expansive if and only if $|t| \le d$ and $d \ge 2$, or $|t| \le -d - 2$ and $d \le -2$.

\begin{theorem}\label{2done}
    Let $A$ be an expansive, integer valued matrix. If $|\det(A)| > 2$ and $A$ is not similar to $\begin{pmatrix}
            0&1\\3&0
        \end{pmatrix}$ over $GL_2(\Z)$, then $A$ is strictly expansive.
\end{theorem}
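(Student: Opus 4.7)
The plan is to leverage the $GL_2(\Z)$-conjugation invariance of strict expansiveness (Proposition \ref{prop:tiling}(\ref{prop:tiling:d})) to reduce to a finite case analysis. For each trace-determinant pair $(t,d)$ satisfying the expansivity criterion with $|d| \ge 3$, Latimer-MacDuffee theory gives a finite set of $GL_2(\Z)$-conjugacy classes (in bijection with ideal classes in the order $\Z[\alpha]$, where $\alpha$ is a root of $\lambda^2 - t\lambda + d$). First I would fix a small-entry representative in each class, typically a companion matrix or a variant obtained by a few integer shears.

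Then I would dispatch each representative via the strongest applicable criterion in escalating order. The simplest is the corollary to Theorem \ref{Var75}: if after a column permutation the matrix is diagonally dominant with $\alpha \ge 2$, strict expansiveness with respect to the unit cube follows immediately. Next is Theorem \ref{thm:connected}, which handles the $\alpha = 1$ cases whenever the associated graph is strongly connected, i.e. $a_{12} \ne 0$ and $a_{21} \ne 0$. The remaining triangular representatives can be reduced via conjugation by matrices of the form $\begin{pmatrix}1&k\\0&1\end{pmatrix}$ to minimize $|a_{12}|$, and are then seen to be strictly expansive with respect to a suitably sheared parallelogram tile; since expansivity forces $|a_{11}|, |a_{22}| \ge 2$, enough room is available whenever $|d| \ge 3$.

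The residual cases are a small finite list of nontriangular matrices with $|d| = 3$ whose entries are too small for the diagonal dominance criteria to apply directly. For each, I would construct a convex, centrally symmetric tile $K$ adapted to $A$'s eigenstructure --- a parallelogram spanned by near-eigenvectors when the eigenvalues are real, a hexagon when they are complex --- and apply Lemma \ref{positivesurgery}, checking that only two antipodal boundary points $\pm x$ of $K$ satisfy $\|Ax\|_K = 1$. The main obstacle is precisely this residual case-by-case tile construction: verifying the antipodal-contact condition requires explicit computation of $A$ on the vertices and edges of the candidate $K$, and a slight misalignment of eigenvectors with lattice directions suffices to force extra boundary contacts. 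The unique class for which every candidate $K$ fails is that of $\begin{pmatrix}0&1\\3&0\end{pmatrix}$, where the eigenvector directions $(1,\pm\sqrt{3})$ sit symmetrically relative to the index-$3$ sublattice $A\Z^2 = \Z \oplus 3\Z$ and force any symmetric convex tile to have more than two boundary points landing in $\partial(AK)$, so the surgery of Lemma \ref{positivesurgery} cannot be performed; this class is therefore carved out of the theorem's statement.
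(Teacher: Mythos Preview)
This theorem is not proved in the present paper; it is quoted from \cite{BowSpe02MeyerType}, and the paper only outlines the strategy via Theorem~\ref{2dtwo} and the two unnumbered results that follow it. That outlined strategy is quite different from yours: it reduces (over $GL_2(\Z)$) to the three canonical forms of Theorem~\ref{2dtwo}, and then for each form constructs an explicit convex, centrally symmetric tile (a parallelogram or hexagon depending on the form) adapted to the matrix entries. Diagonal dominance plays no role.

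Your proposal has a genuine gap. Latimer--MacDuffee gives finitely many $GL_2(\Z)$-classes for each fixed $(t,d)$, but there are infinitely many admissible $(t,d)$ pairs, so you do not obtain a finite case analysis; you need a uniform argument across $(t,d)$, and the one you offer does not work. Concretely, the companion matrix $\begin{pmatrix}0&1\\-d&t\end{pmatrix}$ with $t=d$ (any $d\ge 3$) becomes $\begin{pmatrix}1&0\\t&-d\end{pmatrix}$ after a column swap; this has $\alpha=\min(1,|d|-|t|)=0$, so Varah's bound is vacuous, and $a_{12}=0$ means Theorem~\ref{thm:connected} does not apply either. Such matrices are neither triangular nor covered by your steps, and they exist for every $|d|\ge 3$, so your claim that ``the residual cases are a small finite list of nontriangular matrices with $|d|=3$'' is unjustified. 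The actual reduction in \cite{BowSpe02MeyerType} (Theorem~\ref{2dtwo} here) is what produces a tractable trichotomy.

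Your account of the $\begin{pmatrix}0&1\\3&0\end{pmatrix}$ exclusion is also off. The theorem does not assert that this matrix fails to be strictly expansive; it is carved out simply because the construction of \cite{BowSpe02MeyerType} does not cover it. What the present paper later proves (for companion matrices with $d\ge 3$, $t\in\{0,1\}$) is the narrower statement that no \emph{convex, centrally symmetric} $K$ works, and the argument there is not a count of boundary contacts for Lemma~\ref{positivesurgery} but a direct obstruction via Lemma~\ref{lemma4}.
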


A key step in the proof of Theorem \ref{2done} was finding convenient forms of expansive matrices, as in the following proposition.

\begin{theorem}\label{2dtwo}
        Let $A$ be an expansive, integer valued matrix. Then, $A$ is similar over $GL_2(\R)$ to a matrix $\begin{pmatrix}
            a&b\\c&d
        \end{pmatrix}$ of one of the following three forms:
        \begin{enumerate}
            \item $|b| \ge |a - d|$, $|c| \ge |a - d|$,
            \item $b = 0$ and $|c| \ge |a - d|$,
            \item $b = c = 0$
        \end{enumerate}
\end{theorem}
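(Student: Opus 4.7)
The plan is to invoke the real Jordan canonical form, observing that similarity over $GL_2(\R)$ is exactly the equivalence relation it classifies. Thus it suffices to verify that each of the three possible Jordan structures of a real $2 \times 2$ matrix can be realized in one of the three listed shapes.

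First, suppose $A$ has complex conjugate eigenvalues $\alpha \pm i\beta$ with $\beta \ne 0$. Then $A$ is similar over $\R$ to its real canonical form
\[
\begin{pmatrix} \alpha & -\beta \\ \beta & \alpha \end{pmatrix}.
\]
Here $a - d = 0$ while $|b| = |c| = |\beta|$, so both required inequalities in form (1) hold trivially.

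Second, suppose $A$ has two distinct real eigenvalues; then $A$ is diagonalizable over $\R$, producing form (3). Third, if $A$ has a repeated real eigenvalue $\lambda$, then either $A = \lambda I$ (again form (3)), or $A$ is similar to the Jordan block $\begin{pmatrix} \lambda & 1 \\ 0 & \lambda \end{pmatrix}$. In the latter case, a further conjugation by the swap $\begin{pmatrix} 0 & 1 \\ 1 & 0 \end{pmatrix}$ yields $\begin{pmatrix} \lambda & 0 \\ 1 & \lambda \end{pmatrix}$, which satisfies form (2) since $b = 0$ and $|c| = 1 \ge 0 = |a - d|$.

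Observe that neither the integrality nor the expansiveness of $A$ enters; the conclusion is a purely linear-algebraic statement about real $2 \times 2$ matrices. The only genuine choice in the argument is to route the non-trivial Jordan case into form (2) rather than its unlisted mirror with $c = 0$ and $|b| \ge |a - d|$, which is accomplished by the explicit swap above. I do not anticipate a substantive obstacle.
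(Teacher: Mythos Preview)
Your argument via the real Jordan form is correct for the statement as literally written, and the paper itself does not supply a proof here; the result is quoted from \cite{BowSpe02MeyerType} as background, so there is no in-paper argument to compare against.

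That said, context makes it nearly certain that ``$GL_2(\R)$'' is a typo for ``$GL_2(\Z)$''. The proposition immediately following decides when a matrix similar to $\begin{pmatrix} a & b \\ 0 & a \end{pmatrix}$ is strictly expansive relative to a convex symmetric body, with the answer depending on whether $|b| < a^2$; over $GL_2(\R)$ any two such matrices with $b \ne 0$ are already similar, so that criterion would be vacuous. The proof of that proposition also applies Lemma~\ref{cor36} to the integer matrix $\begin{pmatrix} a & b \\ 0 & a \end{pmatrix}^T$ and then invokes the fact that a matrix and its transpose are $GL_n(\Z)$-similar. More broadly, strict expansiveness is only a $GL_n(\Z)$-invariant (Proposition~\ref{prop:tiling}(\ref{prop:tiling:d})), so integral similarity is the operative equivalence throughout the two-dimensional section.

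For the integral version your approach fails outright: the real Jordan form of an integer matrix typically has irrational entries (for instance $\begin{pmatrix} 0 & 1 \\ -3 & 1 \end{pmatrix}$ has eigenvalues $\tfrac12 \pm \tfrac{\sqrt{11}}{2}i$), and even when the target form happens to be integral the conjugating matrix need not lie in $GL_2(\Z)$. The argument that actually works proceeds instead by conjugating with integer shears $\begin{pmatrix} 1 & \lambda \\ 0 & 1 \end{pmatrix}$, $\lambda \in \Z$, together with the swap $\begin{pmatrix} 0 & 1 \\ 1 & 0 \end{pmatrix}$, and selecting a representative in the $GL_2(\Z)$-orbit that minimises $a_{11}^2 + a_{22}^2$; a direct computation with the shear then forces each off-diagonal entry to be either zero or at least $|a_{11} - a_{22}|$ in absolute value.
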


Implicit in the proof of Theorem \ref{2done} are the following two theorems, which give sufficient conditions for $A$ to be strictly expansive with respect to a centrally symmetric, convex set.

\begin{theorem}
  Let $A$ be an expansive, integer valued matrix with nonnegative trace which is not similar to $\begin{pmatrix}
      0&1\\-d&t
  \end{pmatrix}$ over $GL_2(\R)$. If $A$ is of type (i) or of type (iii), then $A$ is strictly expansive with respect to a convex, centrally symmetric set.

  If $A$ is of type (ii) and $a \not= d$, then $A$ is strictly expansive with respect to a convex, centrally symmetric set.
\end{theorem}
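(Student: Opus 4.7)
The plan is to construct, for each of the three matrix forms in Theorem \ref{2dtwo}, an explicit convex, centrally symmetric set $K$ which tiles $\R^2$ by integer translations and satisfies $A(K^\circ) \supset K$. By Proposition \ref{prop:tiling}(\ref{prop:tiling:b}), the latter condition is equivalent to $\|A^{-1}\|_K < 1$, so the task reduces to exhibiting a norm (whose unit ball tiles $\Z^2$) under which $A^{-1}$ is a strict contraction. The relevant tiles are parallelograms with vertices at $\pm\tfrac12 v_1\pm\tfrac12 v_2$ where $\{v_1,v_2\}$ is a basis of $\Z^2$, and centrally symmetric hexagons whose edges satisfy the standard lattice-compatibility condition.

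For type (iii) the unit square $K=[-\tfrac 12,\tfrac 12]^2$ suffices: since $A=\mathrm{diag}(a,d)$ with $|a|,|d|\ge 2$, one has $\|A^{-1}\|_\infty \le \tfrac 12$. For type (ii) with $A=\begin{pmatrix} a & 0 \\ c & d\end{pmatrix}$, $a\neq d$, and $|c|\ge |a-d|$, I would pick a parallelogram $K$ spanned by $e_1$ and $ne_1+e_2$ for an integer $n$ chosen so that the second edge is as close as possible to the eigenvector of $A$ in the direction $(a-d,c)$ (i.e., $n$ approximates $c/(d-a)$). The inclusion $A^{-1}K \subset K^\circ$ then amounts to four scalar inequalities at the vertices of $K$, which follow from $|a|,|d|\ge 2$ combined with $|c|\ge|a-d|$ and $a\ne d$.

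For type (i), $|b|,|c|\ge |a-d|$, I would take $K$ to be a centrally symmetric hexagon with vertex pairs $\pm v_1, \pm v_2, \pm v_3$ (in cyclic order) chosen so that $v_2-v_1$, $v_3-v_2$, and $-v_1-v_3$ generate $\Z^2$, which is the standard hexagonal tiling criterion. Because the hexagon norm is piecewise linear, $\|A^{-1}\|_K<1$ reduces to checking $A^{-1}v_i\in K^\circ$ for $i=1,2,3$, and each of these in turn is a system of linear inequalities in the matrix entries $a,b,c,d$. Under the type (i) hypothesis together with $|\det A|\ge 2$, I expect this system to be solvable by an explicit, piecewise choice of the $v_i$, and the nonnegative-trace hypothesis is used to fix a consistent set of signs among the $v_i$ so that the six required inclusions are simultaneously strict.

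The principal obstacle is the type (i) step: one must exhibit a single construction (or explicit finite family) of hexagons that works uniformly across the class and verify that all six strict inclusions can be achieved simultaneously. The exclusion of $A$ similar to the companion form $\begin{pmatrix} 0 & 1 \\ -d & t\end{pmatrix}$ should correspond exactly to the degenerate configuration in which a pair of these inequalities force each other to be equalities, so no admissible hexagon delivers a strict contraction. Confirming that this companion case is the only obstruction, together with handling the borderline subcases $|b|=|a-d|$ or $|c|=|a-d|$ where the hexagon degenerates toward a parallelogram, is where the bulk of the technical work lies.
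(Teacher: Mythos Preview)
The paper does not prove this theorem directly; it records the statement as implicit in the proof of Theorem~\ref{2done} in \cite{BowSpe02MeyerType}. The method there, visible through Lemma~\ref{cor36}, uses a \emph{single} one-real-parameter family of parallelograms
\[
K_u=\conv\Bigl\{\pm\bigl(\tfrac{u}{2},\tfrac12\bigr),\ \pm\bigl(\tfrac{u+2}{2},\tfrac12\bigr)\Bigr\},\qquad u\in\R,
\]
for all three types; strict expansiveness with respect to $K_u$ becomes the four explicit inequalities~\eqref{maxu}, and the task is to produce, for each $A$ in the given class, a real $u$ satisfying them.

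Your plan agrees with this for type~(iii) (the square is $K_{-1}$) and is close for type~(ii), although you restrict the shear parameter to integers while the cited argument allows $u\in\R$; that extra freedom is exactly what makes the verification go through uniformly. The genuine gap is type~(i). You abandon parallelograms for hexagons, note that the problem reduces to six vertex inclusions, and then write that you ``expect this system to be solvable by an explicit, piecewise choice of the $v_i$''---but you neither produce the hexagon nor verify a single inclusion. That is a restatement of the problem with more parameters, not a proof. In the cited approach the same parallelograms $K_u$ already handle type~(i), and the exclusion of the companion class $\bigl(\begin{smallmatrix}0&1\\-d&t\end{smallmatrix}\bigr)$ enters concretely as the configuration in which no real $u$ makes all four inequalities in~\eqref{maxu} strict simultaneously. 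Your hexagon route obscures this: three vertex pairs give more degrees of freedom but also more constraints, and you offer no mechanism by which the companion exclusion manifests beyond saying it ``should correspond'' to a degeneracy. To turn the outline into a proof you would at minimum have to exhibit the hexagon explicitly as a function of $(a,b,c,d)$ and carry out the six inequality checks; as written, the type~(i) case is missing.
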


\begin{proposition}
      Let $A$ be an expansive, integer valued matrix with nonnegative trace which is similar to $\begin{pmatrix}
      0&1\\-d&t
  \end{pmatrix}$ over $GL_2(\R)$. 
  \begin{enumerate}
      \item If $d \ge 3$ and $t \ge 2$, then $A$ is strictly expansive with respect to a convex, centrally symmetric set.
      \item If $d \le -3$ and $0 \le t \le -d - 2$, then $A$ is strictly expansive with respect to a convex, centrally symmetric set.
  \end{enumerate}
\end{proposition}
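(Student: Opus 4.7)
The plan is to construct, for each matrix $A$ in the stated $GL_2(\R)$-equivalence class, an explicit convex, centrally symmetric hexagon $K\subset\R^2$ that tiles $\R^2$ by $\Z^2$ translations and satisfies $A(K^\circ)\supset K$; by Proposition~\ref{prop:tiling}(\ref{prop:tiling:b}) the latter is equivalent to $\|A^{-1}\|_K<1$.

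First I would reduce to a convenient integer representative. Strict expansiveness with respect to a convex, centrally symmetric set is preserved under $GL_2(\Z)$ conjugation by Proposition~\ref{prop:tiling}(\ref{prop:tiling:d}), so it suffices to handle one representative in each $GL_2(\Z)$ class sitting inside the given $GL_2(\R)$ class. By the theory of \cite{LatMac33} these representatives are in bijection with ideal classes of $\Z[\alpha]$, where $\alpha$ is a root of $x^2-tx+d$, and in dimension two they can be listed explicitly by the techniques alluded to in the introduction.

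Next I would parameterize centrally symmetric hexagons tiling by $\Z^2$. Writing the vertices in cyclic order as $v_1,v_2,v_3,-v_1,-v_2,-v_3$, the tiling condition amounts to integrality relations on the vectors $v_1+v_2,\,v_2+v_3,\,v_3-v_1$ (they must lie in $\Z^2$, with two of them forming a basis of $\Z^2$). For the companion form $C=\begin{pmatrix}0&1\\-d&t\end{pmatrix}$ I would place one pair of vertices along the slow-contracting direction of $C^{-1}$: in case (1) this is a complex eigendirection when $t^2<4d$ and a real one otherwise, while in case (2) it is the real eigenvector of $C$ corresponding to the smaller-magnitude eigenvalue, and the hexagon should be elongated along that direction. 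The remaining vertices are then determined up to small rational perturbations by the integrality conditions, giving a family of candidate hexagons parameterized by $d$ and $t$.

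Finally, by convexity and linearity of $C^{-1}$, verifying $C(K^\circ)\supset K$ reduces to checking $C^{-1}v_i\in K^\circ$ for the three vertices $v_1,v_2,v_3$; this becomes a finite system of strict inequalities in $d$ and $t$ whose solvability is guaranteed by the hypotheses $d\ge3,\,t\ge2$ (respectively $d\le-3,\,0\le t\le-d-2$). The sharpness of these hypotheses is natural: weakening them runs into the exceptional matrix $\begin{pmatrix}0&1\\3&0\end{pmatrix}$ excluded from Theorem~\ref{2done} or into the determinant-two obstruction of Theorem~\ref{thm:det2}. The main obstacle I expect is balancing arithmetic against geometry, since the vertex coordinates must simultaneously satisfy the integrality relations and the six strict inclusions $C^{-1}v_i\in K^\circ$. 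A secondary difficulty is that if more than one $GL_2(\Z)$ class sits inside the given $GL_2(\R)$ class, each may require its own hexagon construction rather than a single universal family.
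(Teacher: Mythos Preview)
The paper does not supply its own proof of this proposition; it is stated as being ``implicit in the proof of Theorem~\ref{2done}'' from \cite{BowSpe02MeyerType}. From the one piece of that argument that the paper does reproduce (Lemma~\ref{cor36}), the method in the cited reference works with explicit one-parameter families of \emph{parallelograms} $K=\conv\{\pm(u/2,1/2),\,\pm((u/2)+1,1/2)\}$ and reduces $A(K^\circ)\supset K$ to the four-term maximum inequality \eqref{maxu}, which is then solved in $u$ for the given ranges of $d$ and $t$. Your hexagon plan is a different and in principle reasonable route, but what you have written is an outline, not a proof, and two concrete gaps would have to be closed before it could stand on its own.

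First, you never actually construct a hexagon. Saying you would ``place one pair of vertices along the slow-contracting direction of $C^{-1}$'' and that ``the remaining vertices are then determined up to small rational perturbations'' does not produce coordinates, and the assertion that the resulting ``finite system of strict inequalities in $d$ and $t$'' is solvable under the stated hypotheses is exactly the content of the proposition. The parallelogram approach in \cite{BowSpe02MeyerType} succeeds because one writes down $K$ explicitly and checks \eqref{maxu} by hand; your plan contains no analogue of that computation. Second, you correctly flag but do not resolve the $GL_2(\Z)$-versus-$GL_2(\R)$ issue: the proposition concerns \emph{every} integer $A$ in the $GL_2(\R)$ class of the companion matrix, and Proposition~\ref{prop:tiling}(\ref{prop:tiling:d}) only transports the conclusion along $GL_2(\Z)$ conjugacy. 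Handling only the companion form $C$ is therefore insufficient unless you either show the $GL_2(\R)$ class is a single $GL_2(\Z)$ class (it need not be) or, as in \cite{BowSpe02MeyerType}, use a normal form like Theorem~\ref{2dtwo} that already lives inside $GL_2(\Z)$ and then produce a $K$ for each such representative.
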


In order to understand which $2 \times 2$ matrices are strictly expansive with respect to a convex, centrally symmetric set, we have only to consider the following cases.

\begin{enumerate}
    \item $A$ is similar to $\begin{pmatrix}
        a&b\\0&a
    \end{pmatrix}$.
    \item $A$ is similar to $\begin{pmatrix}
        0&1\\
        -d&t
    \end{pmatrix}$ for some $d \ge 3$ and $t = 1$.
    \item $A$ is similar to $\begin{pmatrix}
        0&1\\
        -d&t
    \end{pmatrix}$ for some $d \ge 3$ and $t = 0$.
    \item $|\det(A)| = 2$.
\end{enumerate}

Note that $|\det(A)| = 2$ is covered by Theorem \ref{thm:det2}. In the sequel, we will use the observation that if $x = (x_1, x_2)$ and $y = (y_1, y_2)$ are in a convex, centrally symmetric set $K$ and $x \not= \pm y$, then Gauss's area formula implies
\[
\frac{1}{2} |K| \ge |x_1 y_2 - x_2 y_1|.
\]
If $K$ is not the symmetric convex hull of $x$ and $y$, then the inequality is strict.

Finally, we will use the following lemma, which is Corollary 3.6 in \cite{BowSpe02} restated.

\begin{lemma}\label{cor36} Let $A = \begin{pmatrix}
    a&b\\c&d
\end{pmatrix}$ be an expansive, integer valued matrix. Let $K = \conv \{\pm \colvec{u/2}{1/2}, \pm \colvec{u/2 + 1}{1/2}$. Then $A(K^\circ) \supset K$ if and only if 
\begin{align}\label{maxu}
    \max \{&\left|-uc + a\right|, \left|-(u + 2)c + a\right|, \left|ud - b + (-1 - u)(-uc + a)\right| \nonumber \\ 
     &\left|(u + 2)d - b + (-1 - u)(-(u + 2)c + a)\right|\} < \left|\det(A)\right|
\end{align}
\end{lemma}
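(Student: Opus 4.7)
My plan is to reduce the condition $A(K^\circ)\supset K$ to checking the $K$-norm of $A^{-1}$ applied to the extreme points of $K$. By Proposition~\ref{prop:tiling}(\ref{prop:tiling:b}), $A(K^\circ)\supset K$ is equivalent to $\|A^{-1}\|_K<1$. Since $K$ is a centrally symmetric convex polygon and $A^{-1}$ is linear, this operator norm is attained at an extreme point of $K$, so by central symmetry I only need to verify that $\|A^{-1}v_1\|_K<1$ and $\|A^{-1}v_2\|_K<1$, where $v_1=\colvec{u/2}{1/2}$ and $v_2=\colvec{u/2+1}{1/2}$ are representatives of the two antipodal pairs of vertices.

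Next I would derive a clean formula for $\|\cdot\|_K$ by identifying the four supporting lines of the sides of the parallelogram. Two of the sides lie on the horizontal lines $y=\pm\tfrac12$. A short computation (passing a line through $v_2$ and $-v_1$, and through $v_1$ and $-v_2$) shows that the other two sides lie on $x-(u+1)y=\pm\tfrac12$. Writing $K$ as the intersection of the corresponding half-planes gives
\[
\|(x,y)^T\|_K=\max\bigl\{\,2|y|,\;2|x-(u+1)y|\,\bigr\},
\]
and as a sanity check both $v_1$ and $v_2$ have $K$-norm exactly $1$ under this formula.

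Finally, I would compute the images explicitly using $A^{-1}=\frac{1}{\det A}\begin{pmatrix}d&-b\\-c&a\end{pmatrix}$, which yields
\[
A^{-1}v_1=\frac{1}{2\det A}\begin{pmatrix}ud-b\\-uc+a\end{pmatrix},\qquad A^{-1}v_2=\frac{1}{2\det A}\begin{pmatrix}(u+2)d-b\\-(u+2)c+a\end{pmatrix}.
\]
Substituting these into the formula for $\|\cdot\|_K$ and clearing the factor of $|\det A|$ in the denominators produces precisely the four quantities inside the maximum in \eqref{maxu}. Thus the conjunction $\|A^{-1}v_1\|_K<1$ and $\|A^{-1}v_2\|_K<1$ is equivalent to the stated strict inequality, finishing the proof.

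The only nontrivial step is identifying the correct supporting lines for the two non-horizontal sides of the parallelogram, which gives the somewhat unusual form $2|x-(u+1)y|$ in the $K$-norm. Once that expression is in hand, the rest is routine substitution.
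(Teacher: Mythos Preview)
Your argument is correct. The paper does not supply its own proof of this lemma; it is simply quoted as a restatement of Corollary~3.6 in \cite{BowSpe02}. Your direct verification via Proposition~\ref{prop:tiling}(\ref{prop:tiling:b}), the explicit Minkowski functional $\|(x,y)^T\|_K=\max\{2|y|,\,2|x-(u+1)y|\}$, and evaluation at the two non-antipodal vertices is precisely the natural way to establish it, and all the computations check out.
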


\begin{proposition}
    If $A$ is an expansive, integer valued matrix with nonnegative trace which is similar to $\begin{pmatrix}
        a&b\\0&a
    \end{pmatrix}$, then $A$ is strictly expansive with respect to a convex, centrally symmetric set if and only if $|b| < a^2$.
\end{proposition}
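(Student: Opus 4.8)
The plan is to first put $A$ in a normal form. Since strict expansiveness with respect to a convex, centrally symmetric set is preserved under $GL_2(\Z)$-conjugacy (Proposition \ref{prop:tiling}(\ref{prop:tiling:d}), because such conjugation preserves convexity, central symmetry, and the property of tiling by $\Z^2$), I may assume $A=\begin{pmatrix}a&b\\0&a\end{pmatrix}$ with $a\ge 2$ and $b\ge 0$, so that $\det A=a^2$ and $A^{-1}=\begin{pmatrix}1/a&-b/a^2\\0&1/a\end{pmatrix}$. By Proposition \ref{prop:tiling}(\ref{prop:tiling:b}) the question becomes whether some convex, centrally symmetric $K$ tiling by $\Z^2$ (hence of area $1$) satisfies $\|A^{-1}\|_K<1$, i.e. $A^{-1}K\subset K^\circ$. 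A preliminary observation is that parallelograms do not suffice: by Lemma \ref{cor36} (or a direct $\ell_\infty$ computation, the minimal row-sum over the conjugacy class being $(a+b)/a^2$) the best parallelogram tile only certifies $b<a^2-a$, so for $a^2-a\le b<a^2$ a genuinely hexagonal tile is needed.

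For sufficiency I would use the one-parameter family of centrally symmetric hexagons tiling $\Z^2$ with gluing vectors $(1,0),(0,1),(1,1)$, namely $K$ with vertices $\pm(x,y),\ \pm(1-x,-y),\ \pm(x,1+y)$, and set $y=-\tfrac12+\delta$, so that $K$ degenerates to the parallelogram of Lemma \ref{cor36} as $\delta\to0$. As $A^{-1}$ is linear and $K$ convex, it suffices to check $A^{-1}(\text{vertex})\in K^\circ$. Exactly two constraints bind: the top vertex $(x,\tfrac12+\delta)$ is sheared to the left, and the low vertex $(x,-\tfrac12+\delta)$ is sheared to the right. A direct slice computation shows the right constraint holds iff $\delta\ge\frac{b-a^2+a}{2b}$, while the left one can be met for some admissible $x<\tfrac12$ iff $\frac{b}{a^2}\bigl(\tfrac12+\delta\bigr)<\frac{a+1}{2a}$, i.e. $\delta<\frac{a(a+1)}{2b}-\tfrac12$. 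These two conditions on $\delta$ are compatible iff $\frac{b-a^2+a}{2b}<\frac{a(a+1)}{2b}-\tfrac12$, which simplifies to $b<a^2$; at $b=a^2$ the two thresholds coincide (at $\delta=\tfrac1{2a}$, $x\to\tfrac12$). Thus for $b<a^2$ one chooses $\delta$ in the (nonempty) open interval and a corresponding $x<\tfrac12$, producing the required $K$.

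For necessity I would start from the pointwise identity
\[
[x,A^{-1}x]:=x_1(A^{-1}x)_2-x_2(A^{-1}x)_1=\frac{b\,x_2^2}{a^2},\qquad x\in\R^2 .
\]
If $A^{-1}K\subset K^\circ$ then $x,A^{-1}x\in K$, and since $A^{-1}$ has no eigenvalue $\pm1$ we have $x\neq\pm A^{-1}x$; Gauss's area formula then gives $\frac{b x_2^2}{a^2}\le\frac12$ for all $x\in K$, and applied to the highest point $h:=\max_{x\in K}x_2$ it yields $\frac{bh^2}{a^2}\le\frac12$, strictly, because $A^{-1}x\in K^\circ$ excludes the equality (parallelogram) case. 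This alone only gives $b<2a^2$, so the second ingredient is a matching lower bound on $h$ forced by the tiling. Writing $r(s)$ for the (concave) right-boundary function of $K$, the inclusion $A^{-1}K\subset K^\circ$ is equivalent to the slice inequalities $r(s/a)\ge\frac1a r(s)-\frac{b}{a^2}s$; evaluating these near the bottom of $K$ forces the boundary to bulge to the right, and combined with concavity, central symmetry, and the partition-of-unity constraint $\sum_k w(s-k)\equiv1$ (where $w(s)=r(s)+r(-s)$ and $\int_{-h}^h w=1$) this pins $h$ from below. The two estimates on $h$ collide exactly at $b=a^2$, mirroring the coincidence of the two binding constraints in the sufficiency construction.

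I expect the main obstacle to be this necessity bound for an \emph{arbitrary} convex, centrally symmetric tile rather than for the explicit hexagon family: the clean inequality $\frac{bh^2}{a^2}<\frac12$ is off by a factor of two, and closing the gap requires extracting the full strength of the slice inequalities together with concavity of $r$ and the partition-of-unity condition, precisely in the delicate boundary regime $b=a^2$ where all inequalities become tight. The sufficiency direction, by contrast, is routine once the correct hexagonal family is identified, and the exact cancellation yielding the threshold $b<a^2$ is a reassuring check that this is the sharp constant.
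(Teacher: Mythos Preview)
Your sufficiency argument rests on a false premise. You assert that parallelogram tiles only certify $|b| < a^2 - a$, but that bound comes from applying Lemma~\ref{cor36} directly to $A$, where the four quantities in \eqref{maxu} collapse to $\max\{|a|, |b \pm a|\}$ independently of $u$. The paper instead applies Lemma~\ref{cor36} to the \emph{transpose} $A^T$ (equivalently, it uses parallelograms lying in the vertical strip $|x_1|\le \tfrac12$ rather than the horizontal one), taking $u = (\sgn(b)-a)/a$; a direct check of \eqref{maxu} then succeeds for all $|b| < a^2$ when $a\ge 3$, with $a=2$ handled by an explicit choice of $u$. So parallelograms already suffice for the full range, and your hexagonal construction, even if it can be made to work, is an unnecessary detour.

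The necessity direction has a genuine gap. Your identity $[x,A^{-1}x]=b\,x_2^2/a^2$ is exactly the right tool---the paper uses this same computation---but applying it at the highest point only yields $b<2a^2$, as you acknowledge. The plan to close the gap by ``pinning $h$ from below'' is not carried out and in fact points in the wrong direction. The paper applies the identity not at an extremal point but at the lattice translate of $\colvec{1/2}{0}$ lying in $K$: writing that translate as $\colvec{1/2+k_1}{k_2}$, the area bound gives $2|b|k_2^2/a^2\le 1$, which under $|b|\ge a^2$ forces $k_2=0$ and hence $\colvec{1/2}{0}\in K$. A second Gauss-area estimate then confines $K$ to the strip $\R\times[-\tfrac12,\tfrac12]$, so $h=\tfrac12$ exactly. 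At that point every admissible $K$ is one of the horizontal parallelograms of Lemma~\ref{cor36}, and the computation $\max\{|a|,|b\pm a|\}\ge a^2$ finishes the proof. The idea you are missing is this reduction to a one-parameter family of candidate tiles, which replaces the soft slice-inequality analysis you sketch with a finite check.
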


\begin{proof}
We claim that $u = \frac{\sgn(b) - a}{a}$ satisfies equation \eqref{maxu} condition in Lemma \ref{cor36} for the transpose of $A$ when $|b| < a^2$ and $a > 2$. Indeed, see that when $|b| = a^2$
    \[
    -ub + a = \sgn(b) a^2.
    \]
    Since $-u > 0$, it follows that $|-ub + a| < a^2$ whenever $|b| < a^2$. Similarly, when $|b| = a^2$, 
    \[
    -(u + 2)b + a = -\sgn(b) a^2.
    \]
    Since $-(u + 2) < 0$, it follows that $|-(u + 2)b + a| < a^2$ whenever $|b| < a^2$. Next, note when $a > 2$, since $|(u^2 + u)b| < a + 1$
    \begin{align*}
        |ua + (-1 - u)(-ub + a)| &= |-a + u^2b + ub|\\
        &< 2a + 1 < a^2.
    \end{align*}
    Finally, since $|2b(1 + u)| < 2$,
    \begin{align*}
        |(u + 2) a + (-1 - u)(-(u + 2)b + a)| &= |a + u^2b + ub + 2b + 2bu|\\
        &< 2a + 3 < a^2.
    \end{align*}
    When $a = 2$, $u = -1.4$ works when $b < 0$ and $u = -.58$ works when $b > 0$. Since the transpose of $A$ is similar to $A$ over $GL_n(\Z)$, we are done.
    
    Next, we consider the case when $|b| \ge a^2$. Let $K$ be a convex, centrally symmetric set that tiles $\R^2$ by translations. We start by showing that if $AK^\circ \supset K$, then  $\colvec{1/2}{0} \in K$. There exists $k_1, k_2 \in \Z$ such that $\colvec{1/2}{0} + \colvec{k_1}{k_2} \in K$.  If $k_2 = 0$, then $k_1 \in \{0, -1\}$ and by symmetry, $\colvec{1/2}{0} \in K$. Now, if $k_2 \not= 0$, then the points $w_1 = \colvec{1/2 + k_1}{k_2}$ and $w_2 = A^{-1} \colvec{1/2 + k_1}{k_2}$ are in $K$, and the symmetric convex hull of $\{w_1, w_2\}$ is contained in $K$. However, the measure of the symmetric convex hull of $\{w_1, w_2\}$ is $2k_2^2 |\frac{b}{a}|$, and since the measure of $K$ is less than or equal to 1, this means that $k_2 = 0$.

    Continuing the proof, since $\colvec{1/2}{0} \in K$, if $\colvec{x}{1/2 + k} \in K$ then $|K| \ge \left|\frac 12 + k\right|$, which implies that $k$ is either $0$ or $-1$, so $K \subset \R \times [-1/2, 1/2]$. 
    
    The convex subsets of $\R \times [-1/2, 1/2]$ which tile $\R^2$ by integer translations are exactly those considered in Lemma \ref{cor36}, so it suffices to check equation \eqref{maxu} for $A$. To that end, we note that the maximum reduces to
    \[
    \max\{|a|, |b \pm a|\},
    \]
     which is always larger than $a^2$ when $|b| \ge a^2$ and $A$ is expansive.
\end{proof}

 We present the following lemma without proof.

\begin{lemma}\label{lemma4}
    Let $K$ be a compact, convex, centrally symmetric set. If there exists $z$ such that $|z| \ge 1/2$ and any one of the following conditions hold:
    \begin{enumerate}
        \item $\colvec{0}{z} \in K^\circ$,
        \item $\colvec{z}{0} \in K^\circ$,
        \item $\colvec{x}{z} \in K$ and $\colvec{y}{z} \in K^\circ$, where the sign of $x$ and $y$ are different, or
        \item $\colvec{z}{x} \in K$ and $\colvec{z}{y} \in K^\circ$, where the sign of $x$ and $y$ are different,
    \end{enumerate}
     then $K$ does not tile $\R^2$ by translations.
\end{lemma}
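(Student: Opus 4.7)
The plan is to reduce cases (3) and (4) to cases (1) and (2), and then to handle (1) and (2) directly using central symmetry, convexity of the interior, and Lemma \ref{nointersect}.

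Start with the reduction. Suppose condition (3) holds, so there exist $(x,z)\in K$ and $(y,z)\in K^\circ$ with $\sgn(x)\neq \sgn(y)$ and $|z|\geq 1/2$. Then $0$ lies strictly between $x$ and $y$, so there is a unique $\lambda\in(0,1)$ with $\lambda x+(1-\lambda)y=0$, which yields
\[
(0,z)=\lambda(x,z)+(1-\lambda)(y,z).
\]
A standard convexity argument (pick $\epsilon>0$ with $B_\epsilon((y,z))\subset K$ and note that $\{\lambda(x,z)+(1-\lambda)q:q\in B_\epsilon((y,z))\}$ is a ball around $(0,z)$ contained in $K$) shows that any strict convex combination of a point in $K$ with a point in $K^\circ$ lies in $K^\circ$. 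Hence $(0,z)\in K^\circ$, reducing (3) to (1). The symmetric argument reduces (4) to (2).

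Now handle (1); case (2) is identical after swapping coordinates. Suppose $(0,z)\in K^\circ$ with $|z|\geq 1/2$; passing to $-z$ if necessary, we may assume $z\geq 1/2$. Central symmetry of $K$ transfers to $K^\circ$, so $(0,-z)\in K^\circ$. Since the interior of a convex set is convex, the entire closed segment from $(0,-z)$ to $(0,z)$ lies in $K^\circ$, and in particular the points $(0,1/2)$ and $(0,-1/2)$ are in $K^\circ$ (they are on the segment because $z\geq 1/2$). These two points differ by $(0,1)\in\Z^2\setminus\{0\}$, so $(K-K)\cap\Z^2\neq\{0\}$; indeed, small balls around them are contained in $K$, so the argument of Lemma \ref{nointersect} exhibits a positive-measure overlap between $K$ and an integer translate of $K$. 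Therefore $K$ does not pack by $\Z^2$ translations and hence cannot tile.

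There is no serious obstacle here: the proof is a short chain of (a) convex-combination stability of the interior, used to funnel (3),(4) into (1),(2); (b) central symmetry, to produce the opposite axial point; (c) convexity of $K^\circ$, to fill in the intermediate points $(0,\pm 1/2)$; and (d) the packing obstruction from Lemma \ref{nointersect}. The only place where the hypothesis $|z|\geq 1/2$ enters in an essential way is step (c), where we need the segment in $K^\circ$ to be long enough to capture a pair of points at integer distance.
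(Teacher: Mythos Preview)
The paper explicitly states this lemma without proof, so there is no argument in the paper to compare against. Your proof is correct and is precisely the sort of short argument the author presumably had in mind: reduce (3) and (4) to (1) and (2) via the standard fact that a strict convex combination of a point of $K$ with a point of $K^\circ$ lies in $K^\circ$, then use central symmetry and convexity of $K^\circ$ to place $(0,\pm 1/2)$ (respectively $(\pm 1/2,0)$) in $K^\circ$, giving two interior points at integer distance and hence a positive-measure overlap of $K$ with an integer translate.

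One small remark on the reduction step: your claim that ``$0$ lies strictly between $x$ and $y$'' uses that $x$ and $y$ are both nonzero with opposite signs, which is the natural reading of ``the sign of $x$ and $y$ are different'' and is the only case used in the paper's applications of the lemma. If one wanted to allow, say, $y=0$, that case is immediate anyway since then $(0,z)=(y,z)\in K^\circ$.
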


\begin{proposition}
Let $A$ be an expansive, integer valued matrix that is similar to  $\begin{pmatrix} 0&1\\-d&t\end{pmatrix}$, where $d \ge 3$ and $t \in \{0, 1\}$. If $K$ is a compact, convex, centrally symmetric set that tiles $\R^2$ by translations, then $AK^\circ \not\supset K$.
\end{proposition}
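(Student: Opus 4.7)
The plan is to argue by contradiction. Suppose $K$ is compact, convex, centrally symmetric, tiles $\R^2$ by $\Z^2$ translations, and satisfies $AK^\circ \supset K$. Using the $GL_2(\Z)$-invariance of the strictly expansive property (Proposition~\ref{prop:tiling}(\ref{prop:tiling:d})), I would reduce to the case where $A$ equals its companion matrix $\begin{pmatrix}0&1\\-d&t\end{pmatrix}$.

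Since $K$ tiles by $\Z^2$, $|K| = 1$, and the coset of $\colvec{1/2}{0}$ has a unique (a.e.) representative $w = \colvec{a+1/2}{b} \in K$. The hypothesis gives $A^{-1}w \in K^\circ$. The Gauss area identity cited just before the proposition yields $|\det(w, A^{-1}w)| \leq 1/2$, which for the specific $A$ reads
\[
(a + \tfrac 12)^2 + \tfrac{b(b-t(a+1/2))}{d} \leq \tfrac 12.
\]
For $d \geq 3$, this forces $a \in \{0, -1\}$ and tightly bounds $|b|$.

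The bulk of the proof is a case analysis. In the baseline case $b = 0$, we have $w = \pm\colvec{1/2}{0}$ and $A^{-1}w = \pm\colvec{t/(2d)}{1/2} \in K^\circ$: for $t = 0$, this gives $\colvec{0}{\pm 1/2} \in K^\circ$ and Lemma~\ref{lemma4}(1) contradicts tiling with $z = 1/2$; for $t = 1$, examining the horizontal slices $K \cap \{y = \pm 1/2\}$ together with central symmetry forces a point of opposite $x$-sign to $\colvec{1/(2d)}{1/2}$ onto the slice $\{y = 1/2\}$, triggering Lemma~\ref{lemma4}(3). When $b \neq 0$ with the area bound saturated, $K$ is forced to equal the parallelogram $\conv(\pm w, \pm A^{-1}w)$, whose side vectors do not form a $\Z$-basis of $\Z^2$ (equivalently, $\colvec{0}{1}$ enters the interior of $K - K$), contradicting packing via Lemma~\ref{nointersect}. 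When $b \neq 0$ with strict area bound, I would apply the analogous bound to the tile representative $w'$ of $\colvec{0}{1/2}$, combine both constraints, and use the classification of convex symmetric $\Z^2$-tiles (parallelograms $T[-1/2,1/2]^2$ with $T \in GL_2(\Z)$, or centrally symmetric hexagons with adjacent vertex-sums in $\Z^2$ spanning $\Z^2$) to rule out all remaining configurations.

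The main obstacle I anticipate is the hexagonal subcase for large $d$: one must show that no centrally symmetric hexagon $K = \conv(\pm v_1, \pm v_2, \pm v_3)$ with $|K| = 1$ and with $v_1 + v_2, v_2 + v_3 \in \Z^2$ spanning $\Z^2$ can simultaneously satisfy $A^{-1}v_i \in K^\circ$ for each vertex. Closing this case likely requires turning the expansion hypothesis into explicit strict inequalities on the norms $\|A^{-1}v_i\|_K$ and using the spiralling orbit structure induced by the complex eigenvalues of $A$ (both of modulus $\sqrt{d}$) to show that the resulting Diophantine system has no solution when $d \geq 3$ and $t \in \{0, 1\}$; in essence, the expansion condition should force non-integer vertex-sums, contradicting the hexagonal $\Z^2$-tile integrality condition.
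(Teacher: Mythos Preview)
Your proposal has a genuine gap: it is not a complete proof, and you say so yourself. The hexagonal subcase is left as an ``obstacle'' with only a speculative outline (``likely requires\ldots'', ``should force\ldots''), and the parallelogram subcase with $b\neq 0$ is also only sketched. There is a second, smaller gap in your baseline case $b=0$, $t=1$: you assert that ``central symmetry forces a point of opposite $x$-sign onto the slice $\{y=1/2\}$'', but central symmetry sends $\colvec{1/(2d)}{1/2}$ to $\colvec{-1/(2d)}{-1/2}$, which lies on the wrong slice; you have not produced any point $\colvec{x}{1/2}\in K$ with $x<0$, so Lemma~\ref{lemma4}(3) does not fire as stated.

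The paper avoids all of this by choosing a different coset to attack. Instead of analysing the representative of $\colvec{1/2}{0}$ and then classifying all convex symmetric $\Z^2$-tiles, the paper first shows, via the same Gauss area inequality you use, that the representative of $\colvec{0}{1/2}$ must be $\colvec{0}{1/2}$ itself (the area bound on $\colvec{k}{y}$ and $A^{-1}\colvec{k}{y}$ forces $k=0$). With $\colvec{0}{1/2}\in K$ in hand, it then shows directly that \emph{no} integer translate of $\colvec{1/2}{1/2}$ can lie in $K$: the area inequality against $\colvec{0}{1/2}$ pins $k_1\in\{0,-1\}$, and for each remaining $k_2$ one computes $A^{-1}\colvec{1/2}{1/2+k_2}=\colvec{(t-1-2k_2)/(2d)}{1/2}\in K^\circ$ and pairs it either with $\colvec{0}{1/2}$ (when the first coordinate has the right sign, via convexity) or with $\pm\colvec{1/2}{1/2+k_2}$ itself to trigger Lemma~\ref{lemma4}. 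This contradicts the fact that the coset $\colvec{1/2}{1/2}+\Z^2$ must meet $K$. No tile classification and no hexagon analysis is needed; the whole argument is a short list of explicit contradictions, and you should reorganise your proof along these lines.
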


\begin{proof} Suppose for the sake of contradiction that $K$ as in the example statement exists. Then for every $x \in K$, $A^{-1} x \in  K^\circ$, where $A^{-1} = \begin{pmatrix}
    t/d&-1/d\\1&0
\end{pmatrix}$. The goal of the proof is to show that there is no $k_1, k_2 \in \Z$ such that $\colvec{1/2 + k_1}{1/2 + k_2} \in K$.

We begin by showing that $\colvec{0}{1/2} \in K$. Note that for $y\in \R$ and $k \in \Z$, $A^{-1} \colvec{k}{y} = \colvec{(kt - y)/d}{k} \in K^\circ$, so if $\colvec{k}{y} \in K$, then 
\[
\frac{1}{2} |K| > k^2 + y^2/d - ytk/d.
\]
The minimum value $k^2 + y^2/d - ytk/d$ over $y\in \R$ is $k^2 (1 - \frac{1}{4d})$ when $t = 1$ and $k^2$ when $t = 0$. In either case, $k = 0$ is necessary for $|K| \le 1$. Therefore, if $\colvec{k_1}{1/2 + k_2} \in K$, $k_1 = 0$ and therefore $k_2 \in \{0, -1\}$, so $\colvec{0}{1/2} \in K$.

Next, we show that $\colvec{1/2}{\pm 1/2} \not \in K$. In the case $t = 1$, $A^{-1} \colvec{1/2}{1/2} = \colvec{0}{1/2} \in K^\circ$, so $K$ cannot tile by translations by Lemma \ref{lemma4}.  On the other hand, $A^{-1} \colvec{1/2}{-1/2} =\colvec{1/d}{1/2}$. So, if $\colvec{1/2}{-1/2}$ were in $K$, then $\colvec{-1/2}{1/2} \in K$ and $\colvec{1/d}{1/2} \in K^\circ$, so $K$ cannot tile by translations by Lemma \ref{lemma4}. 

When $t = 0$, $A^{-1} \colvec{1/2}{1/2} = \colvec{-1/(2d)}{1/2}$ and $A^{-1} \colvec{1/2}{-1/2}  = \colvec{1/(2d)}{1/2}$, each of which implies $K$ cannot tile by translations by Lemma \ref{lemma4} if $\colvec{1/2}{\pm 1/2} \in K$.

The third step is to show that $\colvec{1/2 + k_1}{1/2 + k_2}$ cannot be in $K$ for any choice of $k_1, k_2 \in \Z$. Suppose it were in $K$. Since $\colvec{0}{1/2} \in K$, 
\[
\frac{1}{2} |K| \ge 1/2 \left |1/2 + k_1\right |
\]
so $k_1 = 0$ or $k_1 = -1$. Since $K$ is symmetric, we can assume $k_1 = 0$.  The cases $k_2 = 0$ and $k_2 = -1$ were considered above. Note that $A^{-1} \colvec{1/2}{1/2 + k_2} = \colvec{(t - 1 - 2k_2)/2d}{1/2} \in K^\circ$.  If $k_2 \ge 1$, $t - 1 - 2k_2 < 0$, which together with $\colvec{1/2}{1/2 + k_2} \in K$ implies $\colvec{0}{1/2} \in K^\circ$. Lemma \ref{lemma4} then implies $K$ does not tile by translations. 

Finally, if $k_2 \le -2$, then by symmetry, $\colvec{-1/2}{-1/2 - k_2} \in K$, where $-1/2 - k_2 \ge 3/2$. Since $A^{-1} \colvec{1/2}{1/2 + k_2} = \colvec{(t - 1 - 2k_2)/2d}{1/2} \in K^\circ$, $K$ does not tile by translations by Lemma \ref{lemma4}.
\end{proof}

\bibliographystyle{plain}
\bibliography{bibliography.bib}

\end{document}